\numberwithin{equation}{section}
 \DeclareMathOperator\intr{int\,}
\def\j{{\mathbf{j}}}
\def\n{{\mathbf{n}}}
\newtheorem{theorem}{Theorem}[section]
\newtheorem{corollary}{Corollary}[theorem]
\newtheorem{proposition}{Proposition}[section]
\newtheorem{remark}{Remark}
\theoremstyle{definition}
\newtheorem{definition}{Definition}[section]
\newenvironment{example*}
  {\addtocounter{theorem}{-1}\example}
  {\endexample}
  \newcounter{example}[section]
\newenvironment{example}[1][]{\refstepcounter{example}\par\medskip
   \noindent \textbf{Example~\theexample. #1} \rmfamily}{\medskip}
\newcommand{\be}{\begin{equation}}
\newcommand{\ee}{\end{equation}}
\newcommand{\bes}{\begin{equation*}}
\newcommand{\ees}{\end{equation*}}
\newcommand{\N}{\mathbb{N}}
\newcommand{\R}{\mathbb{R}}
\newcommand{\cI}{\mathcal{I}}
\newcommand{\nl}{\vskip 10pt\noindent}
\newcommand{\wt}[1]{{\widetilde{#1}}}
\DeclareMathOperator*\lowdim{\underline{dim}_B}
\DeclareMathOperator*\updim{\overline{dim}_B}
\DeclareMathOperator*\lowlim{\underline{lim}}
\DeclareMathOperator*\uplim{\overline{lim}}
\DeclareMathOperator\Lip{Lip}
\newcommand{\abs}[1]{\left\vert{#1}\right\vert}
\newcommand{\vertiii}[1]{{\left\vert\kern-0.25ex\left\vert\kern-0.25ex\left\vert #1 
    \right\vert\kern-0.25ex\right\vert\kern-0.25ex\right\vert}}
\title{Multivariate  Zipper Fractal Function}
\begin{document}
\begin{center}
{\Large \bf {Multivariate  Zipper Fractal Functions}}
\end{center}
\begin{center}
{{ D. Kumar,  A. K. B. Chand, P. R. Massopust }}\\
Department of Mathematics\\
Indian Institute of Technology Madras\\
Chennai - 600036, India, \\
School of Computation, Information and Technology\\
Department of Mathematics\\
Technical University of Munich (TUM)\\
85748 Garching b. M\"unchen, Germany\\
Email: deependra030794@gmail.com; chand@iitm.ac.in; massopus@ma.tum.de
\end{center}

\begin{abstract}
A novel approach to zipper fractal interpolation theory for functions of several variables is presented.  Multivariate zipper fractal functions are constructed and then perturbed through free choices of base functions, scaling functions, and a binary matrix called  signature to obtain their zipper $\alpha$-fractal versions. In particular, we propose a multivariate Bernstein zipper fractal function and study its approximation properties such as shape preserving aspects, non-negativity, and coordinate-wise monotonicity. In addition, we derive bounds for the graph of a multivariate zipper fractal function by imposing conditions on the scaling factors and the H\"older exponent of the associated germ function and base function. The Lipschitz continuity of multivariate Bernstein functions is also studied in order to obtain estimates for the box dimension of multivariate Bernstein zipper fractal functions.

\end{abstract}

\noindent{\bf Keywords.} Fractal Interpolation Function,  Multivariate Bernstein  Operator, Zipper, Positivity, Monotonicity, Box Dimension
\\{\bf AMS Classifications.} 28A80. 41A63. 41A05. 41A29. 41A30. 65D05.

\section{Introduction}
Interpolation is a basic and fundamental subject in numerical analysis and approximation theory for the continuous representation of discrete data. 
A standard way to obtain a bivariate interpolation from univariate interpolation functions is by using a tensor product if the underlying two variables are considered separately. This procedure is also adapted to multivariate interpolation when data from a multivariate function are prescribed on a Cartesian product of grid points. 
There are numerous ways to approximate  multivariate functions by using, for instance, multivariate polynomials\cite{Trefethen, Mond, Derriennic}, splines\cite{Schultz,Wang_Tan,Nielson}, tensor products splines\cite{Jonge_Zanten}, local methods, global methods, blending-function methods\cite{Gordon}, and Hermite Interpolation Formulas\cite{A_Spitzbart}. All these methods may have advantages
and disadvantages depending on the nature of the data and the application. When data is generated from a very irregular multivariate function, the above methods are not ideal to provide a deep understanding of the true multivariate features. This paper proposes a new approach to describe non-linear patterns associated with a multivariate data generating function by means of zipper multivariate fractal interpolation functions (FIFs). 

Fractal surfaces continue to draw attention to scientists and engineers due to their useful applications in various areas such as medical sciences, surface physics, chemistry, bio-engineering,  metallurgy, computer science, electrical engineering, and earth science.  Fractal surfaces have been found  to be good approximations of natural surfaces in these areas because of their special properties, such as self-similarity, visualization at different scales, and a non-integral fractal dimension. The construction of fractal surfaces using iterated function systems (IFSs) with co-planar boundary were first introduced by Massopust in \cite{Massopust2} with different scaling factors. The construction of fractal surfaces with arbitrary boundary values but equal scaling factors was taken up by Geronimo and  Hardin in \cite{Geronimo_Hardin}.  Hardin and Massopust investigated more general fractal functions defined on complexes of simplices  $D\subseteq \mathbb{R}^n$ into $\mathbb{R}^m$ in \cite{Hardin_Massopust}. Bouboulis and Dalla \cite{Bouboulis_Dalla2,Bouboulis_Dalla3} constructed fractal surfaces using IFSs over grids or rectangular domains. Using tensor product of cardinal spline,
Chand and Navascue\'s proposed bicubic fractal surfaces  \cite{Chand_Navascues}.
 The theory of fractal surfaces has been investigated along various directions, for instance, \cite{Massopust,Ruan_Xu,Bouboulis_Dalla1,Chand_Kapoor,BEHM}. The shape preserving fractal surfaces are developed recently using blending functions and
 univariate fractal functions, see for instance \cite{Chand_vij,Chand_vv, Chand_kr}.
 Aseev \cite{VAO} introduced  the construction of fractals by using the
 idea of a zipper, where the entire graph can be mapped to two consecutive nodes in two different ways. Subsequently, the theory of multi-zipper was investigated  by Tetenov et. al \cite{ATK}. Introducing such a binary array called the signature of a zipper, the class of affine zipper FIFs was introduced recently into the literature by Chand et al. \cite{CVVT}. Further, the calculus of zipper FIFs and cubic zipper FIFs  are studied by  Reddy \cite{kmr} and the approximation by smooth zipper fractal
 functions is investigated in \cite{Vijay_Vijender_Chand}. 
 
 In this paper, we introduce
 the concept of multivariate zipper fractal interpolation functions (ZFIFs) 
 to interpolate and approximate multivariate data  or a multivariate function by using a suitable
 binary matrix called zipper  $\epsilon$. These multivariate zipper fractal functions 
 are more general  than the existing classical and fractal approximants. 
 Based on the existence of ZFIFs, we construct a novel class of multivariate Bernstein zipper $\alpha$-fractal functions using Bernstein polynomials  $B_{n_1,...,n_m}f$\cite{Foupouagnigni_Wouodjie,Davis} as base functions in its IFS  
  for a given $f\in C\left(\prod\limits_{k=1}^mI_k\right)$, where the $I_k$ are compact intervals in $\R$. Multivariate Bernstein zipper $\alpha$-fractal functions $f^{\alpha,\epsilon}_{n_1n_2...n_m}$ converge uniformly to $f$  as $n_i \to \infty $ for all $i$, without having to alter the scaling functions. We prove that the multivariate Bernstein polynomial  $B_{n_1,...,n_m}f$ is Lipschitz if $f$ is.
Employing the H\"older exponents of the germ function, the base function, and the scaling factors, we derive bounds for box-dimension of the graph of a multivariate zipper $\alpha$-fractal function. Our results are more general than several existing results in univariate and multivariate cases \cite{Vijender, Akhtar_Prasad, Pandey_vishwanathan}.
 
This paper is organized as follows. Section \ref{1sec} introduces the basics of univariate zipper fractal functions including its construction. Section \ref{2sec2} is concerned with the constructive existence of  multivariate zipper fractal interpolation on a given multivariate  data set by means of a binary signature matrix.  In addition,  a multivariate (germ) function is fractalized using a zipper setting to present its fractal version through a suitable base function.  When the base function is taken to be a multivariate Bernstein function we obtain multivariate Bernstein zipper $\alpha$-fractal functions. These together with some of their approximation-theoretic properties are introduced in Section \ref{2sec3}. 
Placing restrictions on the scaling factors, it is shown in Section \ref{2sec4} that multivariate Bernstein zipper $\alpha$-fractal functions preserve the non-negativity of multivariate germ functions. This is then extended to coordinate-wise monotonicity in Section \ref{2sec6}. Finally, we derive bounds   for the box dimension of the graph of a multivariate zipper $\alpha$-fractal function. Similar bounds are obtained for multivariate zipper Bernstein  $\alpha$-fractal functions under the assumption that $B_{\textbf{n}}f$ is H\"olderian given that $f$ is.
\section{Basics of Zipper Fractal Functions}\label{1sec}
In this section, we discuss the basics of IFSs and zippers and present the construction of zipper fractal functions. More details can be found in \cite{VAO,Barnsley,CVVT}.

In the following, for an $m\in \N$, we denote by $\N_m :=\{1, 2, \ldots, m\}$ the initial segment of $\N$ of length $m$.
\begin{definition}
Let $1< N\in\N$ and let $w_i :X \to X$, $i\in \N_{N-1}$, be non-surjective maps on a complete metric space $(X,d)$. Then, the system $\widetilde{I} :=\{X; w_i,i\in \N_{N-1}\}$ is called an  IFS with vertices $\{k_1,k_2,\dots,k_N\}\subset X$ provided that
\[
w_i(k_1)=k_{i}\quad\text{and}\quad w_i(k_N)=k_{i+1}. 
\]
The points $k_1$ and $k_N$ are called the initial and final point of the IFS, respectively.
\end{definition}
\begin{definition}
For a binary vector $ \epsilon:=(\epsilon_1,\epsilon_2,\dots,\epsilon_{N-1}) \in \{0,1\}^{N-1}$ called signature, let $w_i: X\to X$, $i\in \N_{N-1}$, be non-surjective maps on a complete metric space $(X,d)$ such that  $w_i$ satisfies 
\[
w_i(k_1)=k_{i+\epsilon_i}\quad\text{and}\quad w_i(k_N)=k_{i+1-\epsilon_i} 
\]
for a given set $\{k_1,k_2,\dots,k_N\}\subset X$.

Then, the system $\widetilde{I}=\{X;w_i, i\in \N_{N-1}\}$ is called a zipper with vertices $\{k_1,k_2,$ $\dots,k_N\}$. Any non-empty compact set $A \subset X$ satisfying the self-referential equation
\begin{equation*}
A=\bigcup^{N-1}_{i=1} w_i(A), 
\end{equation*}
is called the attractor or zipper fractal corresponding to the zipper $\widetilde{I}$.
\end{definition}
Clearly, an IFS is a particular case of a zipper when the signature satisfies  $\epsilon_i=0$, for all $i \in \mathbb{N}_{N-1}$. 

Next, we will review the construction of zipper FIFs (ZFIFs) from
a suitable zipper which is constructed from a given set of interpolation data.
\vskip 6pt
Suppose $2 < N\in \N$. Let a set of interpolation points $\{(x_i,y_i) \in I \times \mathbb{R}: i\in \mathbb{N}_N\}$ be given where $x_1<x_2<\dots<x_N$ is a partition of the interval $I:=[x_1,x_N]$ and  $y_i \in [c,d] \subset \mathbb{R}$, $\forall i \in \mathbb{N}_N$. Let us set $I_i:=[x_i,x_{i+1}]$ and $ D:=I \times [c,d]$. Let $u_i^{\epsilon}:I \rightarrow I_i$, $i\in \N_{N-1}$, be contractive homeomorphisms such that
\begin{equation}\label{eqn1}
u_i^{\epsilon}(x_1)=x_{i+\epsilon_i} \quad\text{and}\quad u_i^{\epsilon}(x_N)=x_{i+1-\epsilon_i}.
\end{equation}
Note that if $u_i^{\epsilon}(x) :=a_ix+b_i$ and $\epsilon_i=1$, then the horizontal scaling factors $a_i$ can be negative. 

Define  $v_i^{\epsilon}: D \rightarrow \mathbb{R}$, $i \in \mathbb{N}_{N-1}$, by 
\begin{equation*}
v_i^{\epsilon}(x,y):=\alpha_i(x)y+q_i(x), 
\end{equation*}
where $\alpha_i$ and $q_i$ are continuous functions on $I$ such that $\|\alpha_i\|_{\infty}<1$, and 
\begin{equation}\label{eqn2}
v_i^{\epsilon}(x_1,y_1)=y_{i+\epsilon_i},\quad\text{and}\quad  v_i^{\epsilon}(x_N,y_N)=y_{i+1-\epsilon_i}, \quad  i \in \mathbb{N}_{N-1}.
\end{equation}
Here $v_i^{\epsilon}$ either contracts  or flips the graph of $f$ over $I$ to $I_i$. Using these maps, we define maps
 $w_i: D \rightarrow I_i \times \mathbb{R}$, $i\in\N_{N-1}$, by
\begin{equation*}
w_i^{\epsilon}(x,y) :=(u_i^{\epsilon}(x),v_i^{\epsilon}(x,y)), \; \; \forall (x,y) \in D.
\end{equation*}
The zipper IFS for the construction of ZFIFs is then given by 
\[
\widetilde{I}^{\epsilon} : =\{D;w_i^{\epsilon}, i\in\N_{N-1}\}
\]  
with vertices $\{v_i=(x_i,y_i)\}_{i=1}^N$ and signature $\epsilon=\{\epsilon_1,\epsilon_2,\dots,\epsilon_{N-1}\}$. For more details, please consult \cite{CVVT}. 
\begin{theorem}\label{2ta} 
The above zipper $\widetilde{I}^{\epsilon}= \{D;w_i^{\epsilon},i\in\N_{N-1}\}$ enjoys the following properties.
\begin{enumerate}
\item[(i)] There exists a unique non-empty compact set $G\subset K$ such that
\[
G=\bigcup\limits^{N-1}_{i=1} w_i^{\epsilon}(G).
\]
\item[(ii)] $G$ is the graph of a continuous function $f^{\epsilon}: I \rightarrow \mathbb{R}$ which interpolates the data $\{(x_i,y_i):i\in\N_N\}$, i.e., $G=\{(x,f^{\epsilon}(x) : x \in I\}$ and, for $i\in\N_N$, $f^{\epsilon}(x_i)=y_i$.
\end{enumerate}
\end{theorem}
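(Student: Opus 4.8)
The plan is to prove the two assertions separately, using the classical machinery of Hutchinson operators for part (i) and of Read--Bajraktarevi\'c (RB) operators for part (ii), adapted to the signature-twisted setting. For (i), I would first observe that the Euclidean metric need not make the $w_i^\epsilon$ contractions (the maps $v_i^\epsilon$ are only vertically contractive), so I would equip $D=I\times[c,d]$ with the metric $d_\theta\big((x,y),(x',y')\big):=|x-x'|+\theta\,|y-y'|$ for a parameter $\theta>0$ to be chosen. Since each $u_i^\epsilon$ is a contractive homeomorphism of $I$ onto $I_i$ with Lipschitz constant $s_i<1$, and since $\|\alpha_i\|_\infty<1$ while $q_i$ is Lipschitz on the compact interval $I$, a standard computation shows that for $\theta$ small enough every $w_i^\epsilon$ is a $d_\theta$-contraction on $D$, with contraction ratio bounded by some $r<1$ uniformly in $i$. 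Then $(D,d_\theta)$ is complete, so by Hutchinson's theorem the map $W(A):=\bigcup_{i=1}^{N-1}w_i^\epsilon(A)$ on the space of non-empty compact subsets of $D$, endowed with the Hausdorff metric induced by $d_\theta$, is a contraction and hence has a unique fixed point $G$; this is exactly the claimed self-referential set, and $G\subset K$ (indeed $G\subset D$).

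For (ii) I would introduce the complete metric space $\cF:=\{g\in C(I):g(x_1)=y_1,\ g(x_N)=y_N\}$ with the uniform metric, and define the RB operator $T:\cF\to\cF$ by
\[
(Tg)(x):=v_i^\epsilon\Big((u_i^\epsilon)^{-1}(x),\,g\big((u_i^\epsilon)^{-1}(x)\big)\Big),\qquad x\in I_i,\ i\in\N_{N-1},
\]
which is meaningful because each $u_i^\epsilon$ is a homeomorphism of $I$ onto $I_i$. The crucial verification is that $Tg$ is well defined and continuous at the interior nodes $x_k$, $2\le k\le N-1$, where $I_{k-1}$ and $I_k$ meet: evaluating the branch on $I_k$ at its left endpoint $x_k$ and using \eqref{eqn1}--\eqref{eqn2} gives $(Tg)(x_k)=v_k^\epsilon(x_1,y_1)=y_{k+\epsilon_k}$ if $\epsilon_k=0$ and $(Tg)(x_k)=v_k^\epsilon(x_N,y_N)=y_{k+1-\epsilon_k}$ if $\epsilon_k=1$, in both cases yielding $y_k$; the branch on $I_{k-1}$ evaluated at its right endpoint $x_k$ likewise yields $y_k$ irrespective of $\epsilon_{k-1}$. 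Hence $Tg\in C(I)$, and $(Tg)(x_1)=y_1$, $(Tg)(x_N)=y_N$, so $T\cF\subseteq\cF$. Moreover $\|Tg-Th\|_\infty\le\big(\max_{i}\|\alpha_i\|_\infty\big)\|g-h\|_\infty$, so $T$ is a contraction and the Banach fixed point theorem yields a unique $f^\epsilon\in\cF$ with $Tf^\epsilon=f^\epsilon$. Unwinding the fixed-point equation shows that the graph $G^\ast=\{(x,f^\epsilon(x)):x\in I\}$ satisfies $w_i^\epsilon(G^\ast)=\{(x,f^\epsilon(x)):x\in I_i\}$, hence $\bigcup_i w_i^\epsilon(G^\ast)=G^\ast$; by the uniqueness in (i), $G=G^\ast$, so $G$ is the graph of $f^\epsilon$. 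Finally, the same node computation applied to the fixed point (using the branch on $I_{i-1}$ for $2\le i\le N$, and membership in $\cF$ for $i=1$) gives $f^\epsilon(x_i)=y_i$ for all $i\in\N_N$, which is the interpolation property.

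The step I expect to be the only genuine subtlety is the consistency of $T$ at the interior nodes: this is precisely where the signature conditions \eqref{eqn1} and \eqref{eqn2} are indispensable, since for $\epsilon_i=1$ the map $u_i^\epsilon$ reverses orientation and the endpoint that $x_1$ (resp.\ $x_N$) is sent to switches, so one must check all the cases $\epsilon_{k-1},\epsilon_k\in\{0,1\}$ to see that the left- and right-hand values at $x_k$ agree. Everything else is routine: the contraction estimates for $W$ and for $T$ are elementary, and the identification of the two fixed points is a direct consequence of uniqueness. A minor point worth flagging is the choice of $\theta$ in the metric $d_\theta$, which must be made after bounding the Lipschitz constants of the $u_i^\epsilon$ and $q_i$; this is standard and does not affect the conclusion.
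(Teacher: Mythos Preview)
Your part (ii) is essentially what the paper does: immediately after stating the theorem the paper introduces the same space $\wt{C}(I)=\{g\in C(I):g(x_1)=y_1,\ g(x_N)=y_N\}$, the same RB operator $T$, and invokes the Banach fixed point theorem via $\|\alpha_i\|_\infty<1$. The paper does not spell out the node-matching verification at all, so your case analysis on $\epsilon_{k-1},\epsilon_k\in\{0,1\}$ is more detailed than, but fully consistent with, the paper's treatment. The paper gives no separate argument for part (i); the theorem is presented as background with a reference to \cite{CVVT}.

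There is one genuine gap in your part (i). You write that ``$q_i$ is Lipschitz on the compact interval $I$'' and use this (together with an implicit Lipschitz bound on $\alpha_i$) to make each $w_i^\epsilon$ a $d_\theta$-contraction. But the paper's hypotheses only require $\alpha_i$ and $q_i$ to be \emph{continuous}; under mere continuity the horizontal increment $|v_i^\epsilon(x,y)-v_i^\epsilon(x',y)|=|(\alpha_i(x)-\alpha_i(x'))y+q_i(x)-q_i(x')|$ need not be controlled by $|x-x'|$, so no choice of $\theta$ will make $w_i^\epsilon$ a metric contraction on $D$ and Hutchinson's theorem does not apply directly. The fix is to reverse the order of the argument: establish $f^\epsilon$ via the RB operator first (this needs only continuity and $\|\alpha_i\|_\infty<1$), observe that its graph $G^\ast$ is a nonempty compact invariant set, and then prove uniqueness by a vertical-fibre argument---if $G'$ is any compact set with $G'=\bigcup_i w_i^\epsilon(G')$, iterate to see that over each $x\in I$ the vertical section of $G'$ has diameter bounded by $(\max_i\|\alpha_i\|_\infty)^n$ times a constant, hence is a singleton, forcing $G'=G^\ast$. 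In the affine zipper setting of \cite{CVVT} that the paper is really quoting, $\alpha_i$ and $q_i$ are automatically Lipschitz and your weighted-metric argument goes through as written.
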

The above theorem shows the existence of a zipper interpolation function whose graph is the attractor of an associated zipper IFS. 

To obtain a recursive formula for the ZFIF  $f^{\epsilon}$, we
proceed as follows. Let $\epsilon \in \{0,1\}^{N-1}$ be fixed, and let 
\[
\wt{C}(I):=\{g \in C(I) : g(x_1)=y_1, \; g(x_N)=y_N\}. 
\]
Then, $\wt{C}(I)$ is a closed \emph{metric} subspace of $C(I)$ and complete with respect to the metric $d$ induced by the sup-norm.

Now define a Read-Bajraktarevi\'c operator $T:\wt{C}(I) \rightarrow \wt{C}(I)$ by 
\begin{equation*}
(Tg)(x) := \sum_{i=1}^{N-1} v_i^{\epsilon} ( (u_i^{\epsilon})^{-1} (x),
g \circ (u_i^{\epsilon})^{-1} (x)) \; \chi_{u_i^{\epsilon}(I)} (x), \quad x \in I.
\end{equation*}
Clearly,  as $\|\alpha_i\|_{\infty}<1$, $T$ is contraction on $(\wt{C}(I),d)$. By the Banach fixed point theorem, $T$ has a unique fixed point $f^{\epsilon}$ which obeys the self-referential equation
\begin{equation*}
f^{\epsilon} = \sum_{i=1}^{N-1} v_i^{\epsilon} ( (u_i^{\epsilon})^{-1} ,
f^{\epsilon} \circ (u_i^{\epsilon})^{-1} ) \; \chi_{u_i^{\epsilon}(I)}.
\end{equation*}
We call this interpolating function $f^{\epsilon}$ a zipper fractal interpolation function (ZFIF) corresponding to the given data $\{(x_i,y_i):i\in\N_N\}$  and the signature $\epsilon=(\epsilon_1,\epsilon_2,\dots,\epsilon_{N-1})\in \{0,1\}^{N-1}$ for a fixed scaling function vector $\alpha:=(\alpha_1,\alpha_2,\dots,\alpha_{N-1})$.  

For a prescribed function $f \in C(I)$, if we choose 
\[
q_i(x):=f(u_i(x))-\alpha_i(x)b(x), 
\]
for $i \in \mathbb{N}_{N-1}$, and $y_i=f(x_i)$, for $i \in \mathbb{N}_N$, where $b$ is called a base function satisfying $f(x_1)=b(x_1)$ and $f(x_N)=b(x_N)$, then the corresponding ZFIF $f_{\alpha}^{\epsilon}$ is called a zipper $\alpha$-fractal function. The concept of such zipper fractal functions will be extended to the multivariate setting in the next section.
\section{Multivariate Zipper Fractal Functions}\label{2sec2}
In the first part of this section, we show the existence of multivariate ZFIFs in a deterministic way with constant scaling functions. This concept is then used to perturb any multivariate function $f$ to construct its fractal analogue by using a suitable base function in the second part.
\subsection{Multivariate Zipper Fractal Interpolation }\label{2subsec2.1}
For $m\in \mathbb{N}$, we adopt the following notation. 
\begin{gather*}
\mathbb{N}_{m,0} := \{0,1,...,m\},\quad \partial\mathbb{N}_{m,0} :=\{0,m\},  \quad \intr{\mathbb{N}_{m,0}} :=\{1,...,m-1\}.
\end{gather*}
Finite tuples of elements from $\N_m$ are denoted by expressions like $\j:=(j_1,\cdots,j_m)$. Furthermore, let
\begin{gather*} 
\epsilon :=(\epsilon^1,...,\epsilon^m) \in \prod_{k=1}^{m}\{0,1\}^{\N_k},\\
 \cI := \prod_{k=1}^m I_k,\quad I_k := [a_k, b_k]\subset\R, \;a_k < b_k,\; k \in \N_m,
\end{gather*}
where $\prod$ denotes the Cartesian product of sets. 

Let $2\leq m\in\N$ and let $C(\mathcal{I})$ denote the Banach space of continuous functions $f:\cI\to\R$ equipped with the sup-norm. For each $k\in \N_m$, define a partition of $I_k$ by
\[
a_k =: x_{k,0}< \cdots <x_{k,N_k}:=b_k.    
\]
Consider the set of interpolation data points
\begin{center}
$\Delta := \left\{(x_{1,j_1},...,x_{m,j_m},y_{\j})\in \cI \times\R:  \mathbf{j} \in \prod\limits_{k=1}^m \mathbb{N}_{k,0}\right\}$.    
\end{center}
Since $\{a_k=x_{k,0},...,x_{k,N_k}=b_k\}$ is the partition of $I_k$, denote the $j_k$-th sub-interval of $I_k$ by $I_{k,j_k}=[x_{k,j_k-1},x_{k,j_k}]$, $j_k\in \mathbb{N}_{N_k}$. For every $ j_k\in \mathbb{N}_{N_k}$, consider an affine map 
$u_{k,j_k}^{\epsilon^k}:I_k \to I_{k,j_k}$ satisfying 
\begin{equation}\label{2eq1}
|u_{k,j_k}^{\epsilon^k}(x)-u_{k,j_k}^{\epsilon^k}(x')|\leq \alpha_{k,j_k}|x-x'|, \quad\forall x, x' \in I_k,
\end{equation} 
where $0\leq \alpha_{k,j_k} < 1$, and
\begin{equation}\label{2eq2}
\begin{cases}
    u_{k,j_k}^{\epsilon^k}(x_{k,0})=x_{k,j_k-1+ \epsilon^k_{j_k}}~ \text{and} ~ u_{k,j_k}^{\epsilon^k}(x_{k,N_k})=x_{k,j_k- \epsilon^k_{j_k}}, \text{if}~ j_k ~\text{is  odd},\\
    u_{k,j_k}^{\epsilon^k}(x_{k,0})=x_{k,j_k-\epsilon^k_{j_k}}~ \text{and}~ u_{k,j_k}^{\epsilon^k}(x_{k,N_k})=x_{k,j_k-1+ \epsilon^k_{j_k}}, \text{if}~ j_k ~\text{is  even}.
    \end{cases}
\end{equation}
From \eqref{2eq2},  it is easy to check that 
\begin{equation}\label{2eq3}
    (u_{k,j_k}^{\epsilon^k})^{-1}(x_{k,j_k})= (u_{k,j_k+1}^{\epsilon^k})^{-1}(x_{k,j_k}), \forall j_k \in \intr\mathbb{N}_{N_k,0}.
\end{equation}
For each $k\in \mathbb{N}_{m}$,  define a map $\tau_k: \mathbb{N}_{N_k}\times\{0,N_k\}\to \mathbb{Z}$ by
\begin{equation}\label{2eq4}
\begin{cases}
\tau_k(j,0):=j-1+\epsilon_{j}^k \quad\text{and}\quad \tau_k(j,N_k):=j-\epsilon_j^k \;\text{if}\; j \;\text{is odd},\\
\tau_k(j,0):=j-\epsilon_j^k \quad\text{and}\quad \tau_k(j,N_k):=j-1+\epsilon_j^k \;\text{if}\; j \;\text{is even}.
   \end{cases}
\end{equation}
      Using  \eqref{2eq4}, we can rewrite \eqref{2eq2} as
    \begin{equation}\label{2eq5}
       u_{k,j_k}^{\epsilon^k}(x_{k,i_k})=x_{k,\tau_k(j_k,i_k)}, \quad\forall j_k \in \mathbb{N}_{N_k},\; i_k\in \partial\mathbb{N}_{N_k,0},\; k\in \mathbb{N}_m.
    \end{equation}  
Let $\mathcal{K}:=\mathcal{I}\times \mathbb{R}$. For each $\mathbf{j}\in \prod\limits_{k=1}^{m}\mathbb{N}_{N_k}$, define a continuous function $v_{\mathbf{j}}^{\epsilon}:\mathcal{K} \to \mathbb{R}$ satisfying the following conditions:
\begin{equation}\label{2eq6}
    v_{\mathbf{j}}^{\epsilon}(x_{1,i_1},...,x_{m,i_m},y_{i_1...i_m})=y_{\tau_1(j_1,i_1)...\tau_m(j_m,i_m)}, \quad\forall \; \mathbf{i} \in \prod_{k=1}^{m}\partial \mathbb{N}_{N_k,0}  
\end{equation} and
\begin{equation}\label{2eq7}
    | v_{\mathbf{j}}^{\epsilon}(x_1,...,x_m,y)- v_{\mathbf{j}}^{\epsilon}(x_1,...,x_m,y')|\leq \gamma_{\mathbf{j}}|y-y'|,
\end{equation}
for all $(x_1,...,x_m)\in \mathcal{I}$ and   $y,y'\in \mathbb{R}$,
where $0\leq \gamma_{\mathbf{j}} <1$.

Next, for any $\mathbf{j}\in \prod\limits_{k=1}^{m} \mathbb{N}_{N_k}$,  we define $W_{\mathbf{j}}^{\epsilon}:\mathcal{K}\to\mathcal{K} $ by 
\begin{equation}\label{2eq8}
    W^{\epsilon}_{\mathbf{j}}(x_1,...,x_m,y):=(u_{1,j_1}^{\epsilon^1}(x_1),...,u_{m,j_m}^{\epsilon^m}(x_m), v_{\mathbf{j}}(x_1,...,x_m,y)).
\end{equation}
The system
\begin{equation}\label{2eq9}
I^{\epsilon} = \left\{ K,W_{\mathbf{j}}^{\epsilon}: {\mathbf{j}}\in \prod\limits_{k=1}^m \mathbb{N}_k\right\}.
\end{equation} is called multi-zipper IFS with vertices $ \Delta= \{(x_{k,j_1},...,x_{k,j_m},y_j): {\mathbf{j}} \in \prod\limits_{k=1}^m \mathbb{N}_k\}$ and signature $\epsilon$.
Let us consider 
\[\mathcal{G}=\left\{g\in C(\mathcal{I}): g(x_{1,j_1},...,x_{m,j_m})=y_{\mathbf{j}},~ \forall ~{\mathbf{j}}\in \prod\limits_{k=1}^{m}\partial\mathbb{N}_{N_k,0}\right\}\] 
endowed with  the uniform metric   
\[\rho(f,g)=\max\left\{ |f(x_1,...,x_m)-g(x_1,...,x_m)|:(x_1,...,x_m)\in \prod\limits_{k=1}^{m}I_k\right\}\] 
for $f,g\in \mathcal{G}$.
Then $(\mathcal{G},\rho)$ is complete metric space. 

Define a Read-Bajrakterivi\`c operator $T^{\epsilon}:\mathcal{G}\to \mathcal{G}$  on $(\mathcal{G},\rho)$\cite{Massopust} by 
\begin{align}\label{2eq10}
T^{\epsilon}g(x) := & \sum_{\mathbf{j} \in \prod\limits_{k=1}^{m} \mathbb{N}_{N_k}} v_{\mathbf{j}}^{\epsilon}((u_{1,j_1}^{\epsilon^1})^{-1}(x_1),...,(u_{m,j_m}^{\epsilon^m})^{-1}(x_m),\nonumber\\ 
& \qquad g((u_{1,j_1}^{\epsilon^1})^{-1}(x_1),...,(u_{m,j_m}^{\epsilon^m})^{-1}(x_m))) \chi_{u_{\mathbf{j}}^{\epsilon}(\mathcal{I})}(x),
\end{align}
for all $x:=(x_1,...,x_m)\in \mathcal{I}$.

One observes that $T^{\epsilon}g$ is not continuous for all $\epsilon$. In order to achieve continuity, we restrict the signature $\epsilon$ to $\epsilon_{j_k}^k=\epsilon_{j_k+1}^k$, for each $j_k\in\N_{{N}_k-1}$, where $\epsilon_{j_k}^k$ denotes the $j_k$-th component of the binary column vector $\epsilon^k$.
\begin{theorem}\label{2th0}
Let $\Delta := \left\{(x_{1,j_1},...,x_{m,j_m},y_{\mathbf{j}}): \j \in \prod\limits_{k=1}^m N_{k,0}\right\}$ be a set of multivariate interpolating data points and  
$\epsilon=(\epsilon^1,...,\epsilon^m) \in \prod\limits_{k=1}^{m}\{0,1\}^{N_k}$ be a signature for the  IFS $I^{\epsilon}=\left\{ \mathcal{K},W_{\j}^{\epsilon}:\j\in \prod\limits_{k=1}^{m}\mathbb{N}_{N_k}\right\}$ as defined in \eqref{2eq9}. Assume that
for all   $j_k \in $  $\intr\mathbb{N}_{N_k,0}$, $1\leq k \leq m$,
\begin{equation}\label{2eq0}
\begin{split}
(u_{k,j_k}^{\epsilon^k})^{-1}(x_{k,j_k}) & = (u_{k,j_k+1}^{\epsilon^k})^{-1}(x_{k,j_k})=:x_k^{*},\\
    v_{j_1,...,j_k,...,j_m}^{\epsilon}(x_1,...,x_{k-1}, & x_{k}^*,x_{k+1},...,x_m, y)\\   =v_{j_1,...,j_k+1,...,j_m}^{\epsilon}(x_1,...,&x_{k-1},x_{k}^*,x_{k+1},...,x_m, y),  
\end{split}
\end{equation}
\noindent where  $(x_1,...,x_{k-1},x_{k+1},...,x_m)\in  \prod\limits_{i=1,i\ne k}^{m}I_i$, $y \in \mathbb{R}$. Then, there exists a continuous function $f^{\epsilon}:\cI\to\R$  such that
\begin{enumerate}
\item[(i)] $f^{\epsilon}$ interpolates the given multivariate data set $\Delta $, that is, 
\[
 f^{\epsilon}(x_{1,j_1},...,x_{m,j_m})=y_{\j}, \quad\forall \j \in \prod\limits_{k=1}^{m}\mathbb{N}_{N_k,0}.
\]
\item[(ii)] $ G:=\{(x,f^{\epsilon}(x)): x \in \mathcal{I}\}$ is the graph of the Zipper fractal function $f^{\epsilon}$ and satisfies 
\[
G=\bigcup_{\mathbf{j} \in \prod\limits_{k=1}^m \mathbb{N}_{N_k}}W_{\mathbf{j}}^{\epsilon}(G).
\]
\end{enumerate}
\end{theorem}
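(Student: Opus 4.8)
The plan is to mimic the univariate construction reviewed in Section~\ref{1sec}, replacing the Banach fixed point argument on $C(I)$ by one on $(\mathcal{G},\rho)$, and then to identify the graph of the resulting fixed point with the attractor of the IFS $I^\epsilon$. First I would check that $T^\epsilon$ maps $\mathcal{G}$ into itself: for this one must verify that $T^\epsilon g$ is well-defined (the pieces agree on the overlaps $u_{k,j_k}^{\epsilon^k}(I_k)\cap u_{k,j_k+1}^{\epsilon^k}(I_k)$, which is exactly what hypothesis~\eqref{2eq0} together with~\eqref{2eq3} guarantees), that it is continuous, and that it respects the interpolation conditions at the grid points $\prod_k\partial\mathbb{N}_{N_k,0}$, which follows from~\eqref{2eq5} and~\eqref{2eq6}. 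The continuity check is the place where the restriction $\epsilon^k_{j_k}=\epsilon^k_{j_k+1}$ is used, since otherwise the two definitions of the map on adjacent subrectangles need not glue.

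Next I would estimate, for $g,h\in\mathcal{G}$ and $x$ lying in a fixed subrectangle $u_{\mathbf{j}}^\epsilon(\mathcal{I})$,
\begin{equation*}
|T^\epsilon g(x)-T^\epsilon h(x)| = \bigl|v_{\mathbf{j}}^{\epsilon}(\cdots,g(\cdots))-v_{\mathbf{j}}^{\epsilon}(\cdots,h(\cdots))\bigr|\le \gamma_{\mathbf{j}}\,\rho(g,h),
\end{equation*}
using the Lipschitz bound~\eqref{2eq7}, so that $\rho(T^\epsilon g,T^\epsilon h)\le\gamma\,\rho(g,h)$ with $\gamma:=\max_{\mathbf{j}}\gamma_{\mathbf{j}}<1$. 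Hence $T^\epsilon$ is a contraction on the complete metric space $(\mathcal{G},\rho)$ and the Banach fixed point theorem yields a unique $f^\epsilon\in\mathcal{G}$ with $T^\epsilon f^\epsilon=f^\epsilon$. Membership in $\mathcal{G}$ already gives part~(i) at the boundary grid points; the interior grid points then follow inductively by applying the self-referential relation $f^\epsilon=T^\epsilon f^\epsilon$ and using~\eqref{2eq5}–\eqref{2eq6}, exactly as in the proof of Theorem~\ref{2ta}(ii).

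For part~(ii), I would show that $G:=\{(x,f^\epsilon(x)):x\in\mathcal{I}\}$ is the attractor of $I^\epsilon$. Writing out $W_{\mathbf{j}}^\epsilon(G)$ using~\eqref{2eq8} and the change of variables $x_k=u_{k,j_k}^{\epsilon^k}(t_k)$, one sees that a point $(x,y)$ lies in $\bigcup_{\mathbf{j}}W_{\mathbf{j}}^\epsilon(G)$ iff, for the index $\mathbf{j}$ with $x\in u_{\mathbf{j}}^\epsilon(\mathcal{I})$, $y=v_{\mathbf{j}}^\epsilon((u^{\epsilon})^{-1}x,\,f^\epsilon((u^\epsilon)^{-1}x))$, i.e. iff $y=(T^\epsilon f^\epsilon)(x)=f^\epsilon(x)$; well-definedness on the overlaps is again~\eqref{2eq0}. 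Thus $G=\bigcup_{\mathbf{j}}W_{\mathbf{j}}^\epsilon(G)$, and uniqueness of such a compact set follows from the contractivity of the $W_{\mathbf{j}}^\epsilon$ in a suitable metric on $\mathcal{K}$ (a standard Hausdorff-metric argument, cf.~\cite{Barnsley,CVVT}). The main obstacle I anticipate is purely bookkeeping rather than conceptual: keeping the multi-index notation and the parity-dependent definitions in~\eqref{2eq4} straight while verifying the consistency/continuity conditions on all the lower-dimensional interfaces $\{x_k=x_k^*\}$ — in higher dimensions there are many such faces, and one must check that the matching hypothesis~\eqref{2eq0} is imposed for each coordinate direction $k$ separately, which is precisely how the statement is phrased.
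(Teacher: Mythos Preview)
Your proposal is correct and follows essentially the same route as the paper's proof: verify that $T^\epsilon$ is well-defined on overlapping faces via~\eqref{2eq0}, show $T^\epsilon(\mathcal{G})\subset\mathcal{G}$ using~\eqref{2eq5}--\eqref{2eq6}, establish contractivity from~\eqref{2eq7}, apply Banach's fixed point theorem, and then check the self-referential equation for the graph directly. The only cosmetic difference is that the paper verifies $T^\epsilon g(x_{1,i_1},\ldots,x_{m,i_m})=y_{\mathbf{i}}$ for \emph{all} grid points $\mathbf{i}\in\prod_k\mathbb{N}_{N_k,0}$ in a single step (by writing each $i_k=\tau_k(j_k,l_k)$ for some $l_k\in\partial\mathbb{N}_{N_k,0}$), which immediately gives part~(i); your ``induction'' is this same computation, and your extra remark about Hausdorff-metric uniqueness of the attractor is not needed since the statement only asks for the equation $G=\bigcup_{\mathbf{j}}W_{\mathbf{j}}^\epsilon(G)$.
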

\begin{proof}

The proof of this theorem is  similar to the bivariate case as explained in \cite{Ruan_Xu}, but for the reader's convenience, we give a short  explanation  of it. 
 
It follows from \eqref{2eq10} that $Tg$ is continuous on  $\prod\limits_{k=1}^{m}I_{k,j_k}$. To prove that  $Tg$ is continuous on the $m$-dimensional hyperrectangle $\prod\limits_{k=1}^{m}I_{k}$, it is sufficient  to show that $Tg$ is well-defined on  the hyperrectangle  $\prod\limits_{k=1}^{m}I_{k,j_k}$.

\noindent
\textbf{Claim:} $T^{\epsilon}$ is well-defined.
\vskip 3pt\noindent
 Assume   $j_k \in \intr\mathbb{N}_{N_k,0}$, $1\leq k \leq m$,
 and $X :=(x_1,\cdots,x_k,\cdots,x_m)\in \mathcal{I}$, with $x_k=x_{k,j_k}$.
Then there are following two cases:
\nl
\textbf{Case (i):} Assume $x_{k,j_k}$ is an element of $I_{k,j_k}$. Then, by \eqref{2eq0}, we have  
 \begin{align*}
 T^{\epsilon}f(X) &= v_{j_1,...,j_k,..,j_m}((u_{1,j_1}^{\epsilon^1})^{-1}(x_1),..,(u_{k,j_k}^{\epsilon^k})^{-1}(x_k),\ldots,\\
    &\qquad(u_{m,j_m}^{\epsilon^m})^{-1}(x_m),g((u_{1,j_1}^{\epsilon^11})^{-1}(x_1),...,(u_{m,j_m}^{\epsilon^m})^{-1}(x_m))). 
 \end{align*}
 \nl
\textbf{Case (ii):} Consider $x_{k,j_k}$ as an element of $I_{k,j_k+1}$. Then, by \eqref{2eq0}, we have  
 \begin{align*}
T^{\epsilon}f(X)= &v_{j_1,...,j_k+1,..,j_m}((u_{1,j_1}^{\epsilon^1})^{-1}(x_1),..,(u_{k,j_k+1}^{\epsilon^k})^{-1}(x_k),\ldots,\\
    &\qquad (u_{m,j_m}^{\epsilon^m})^{-1}(x_m),g((u_{1,j_1}^{\epsilon^11})^{-1}(x_1),...,(u_{m,j_m}^{\epsilon^m})^{-1}(x_m)))\\
   =& v_{j_1,...,j_k,..,j_m}((u_{1,j_1}^{\epsilon^1})^{-1}(x_1),..,(u_{k,j_k}^{\epsilon^k})^{-1}(x_k),\ldots, \\
    &\qquad (u_{m,j_m}^{\epsilon^m})^{-1}(x_m),g((u_{1,j_1}^{\epsilon^11})^{-1}(x_1),...,(u_{m,j_m}^{\epsilon^m})^{-1}(x_m))). 
 \end{align*}
Similarly, we can check the other possible cases. Hence,  $Tg$ is well-defined  on the boundary of  $\prod\limits_{k=1}^{m}I_{k,j_k}$ and therefore continuous on $\mathcal{I}$.

 Let $\mathbf{i}:=(i_1,...,i_m)\in \prod\limits_{k=1}^{m}\mathbb{N}_{N_k,0}$.  Choose $\mathbf{l}:=(l_1,...,l_m)\in \prod\limits_{k=1}^{m} \partial \mathbb{N}_{N_k,0},$ $ \mathbf{j}\in \prod\limits_{k=1}^{m}\mathbb{N}_{N_k}$ such that $\mathbf{i}=(\tau_1(j_1,l_1),...,\tau_m(j_m,l_m))$. According to the definition of $\tau_k$, we have
 $(u_{k,j_k}^{\epsilon^k})^{-1}(x_{k,i_k})=x_{k,l_k}$, for all $k\in \mathbb{N}_{m}$. Using \eqref{2eq6} and \eqref{2eq10}, we obtain
 \begin{align*}
T^{\epsilon}g(x_{1,i_1},...,x_{m,i_m}) &= v_{\textbf{j}}^{\epsilon}((u_{1,j_1}^{\epsilon^1})^{-1}(x_{1,i_1}),..,(u_{m,j_m}^{\epsilon^m})^{-1}(x_{m,i_m}),\\
  &\qquad f((u_{1,j_1}^{\epsilon^1})^{-1}(x_{1,i_1}),...,(u_{m,j_m}^{\epsilon^m})^{-1}(x_{m,i_m}))) \\
  &=v_{\textbf{j}}^{\epsilon}(x_{1,l_1},..,x_{m,l_m}, f(x_{1,l_1},...,x_{m,l_m}))\\
  &=v_{\textbf{j}}^{\epsilon}(x_{1,l_1},..,x_{m,l_m}, y_{l})=y_{\tau_1(j_1,l_1)...\tau_m(j_m,l_m)}=y_{\textbf{i}}.
 \end{align*}
Therefore, $T^{\epsilon}f\in \mathcal{G}$ and this shows that $T^{\epsilon}$ is a map from $\mathcal{G}$ to $\mathcal{G}$. 

Now, let $f,g \in \mathcal{G}$, $X:=(x_1,...,x_m) \in \prod\limits_{k=1}^m I_{k,j_k} $ and   
\[
\|\gamma \|_{\infty}:=\max\left\{\gamma_{\mathbf{j}}: {\mathbf{j}}\in \prod\limits_{k=1}^m \mathbb{N}_k \right\}.
\]
Using  \eqref{2eq7} and \eqref{2eq10}, we establish the contractivity of $T$ as follows:
 
\begin{align*}
|(T^{\epsilon}f &-T^{\epsilon}g)(X)| =\\
&\left|v_{\mathbf{j}}^{\epsilon}((u_{1,j_1}^{\epsilon^1})^{-1}(x_{1}),..,(u_{m,j_m}^{\epsilon^m})^{-1}(x_{m}),f((u_{1,j_1}^{\epsilon^1})^{-1}(x_{1}),(u_{m,j_m}^{\epsilon^m})^{-1}(x_{m})))\right.\\
& - \left. v_{\mathbf{j}}^{\epsilon}((u_{1,j_1}^{\epsilon^1})^{-1}(x_{1}),..,(u_{m,j_m}^{\epsilon^m})^{-1}(x_{m}), g((u_{1,j_1}^{\epsilon^1})^{-1}(x_{1}),...,(u_{m,j_m}^{\epsilon^m})^{-1}(x_{m})))\right|\\
         & \leq \gamma_{\mathbf{j}} \abs{ f((u_{1,j_1}^{\epsilon^1})^{-1}(x_{1}),...,(u_{m,j_m}^{\epsilon^m})^{-1}(x_{m})) -g((u_{1,j_1}^{\epsilon^1})^{-1}(x_{1}),...,(u_{m,j_m}^{\epsilon^m})^{-1}(x_{m})) }
         \\& \leq \| \gamma\, \|_{\infty}\, \| f-g \|_{\infty}. 
 \end{align*} 
As $X \in \prod\limits_{k=1}^m I_{k,j_k} $ was arbitrary, 
\[
\|T^{\epsilon}f-T^{\epsilon}g \|_{\infty} \leq \| \gamma \|_{\infty} \| f-g \|_{\infty}. 
\]
Using the Banach fixed point theorem, we conclude that $T^{\epsilon}$ has a unique fixed point $f^{\epsilon}$ in the complete metric spaces $\mathcal{G}$, i.e.,
$T^{\epsilon} f^{\epsilon}=f^{\epsilon}$. Equivalently, 
\begin{align}\label{2eq11}
f^{\epsilon} (x_1,\dots,x_m) &= \sum_{\mathbf{j}\in \prod\limits_{k=1}^{m}\mathbb{N}_{N_k}} v_{\mathbf{j}}^{\epsilon}((u_{1,j_1}^{\epsilon^1})^{-1}(x_1),\dots,(u_{m,j_m}^{\epsilon^m})^{-1}(x_m),\nonumber\\
  &\qquad f^{\epsilon}((u_{1,j_1}^{\epsilon^1})^{-1}(x_1),\dots,(u_{m,j_m}^{\epsilon^m})^{-1}(x_m))  \chi_{u_{\mathbf{j}}^{\epsilon}(\mathcal{I})}(x_1, \dots ,x_m),
\end{align}
for all $(x_1, \dots ,x_m)\in \mathcal{I}$.

Let us assume that for $X=(x_1,\dots,x_m)$, 
\[
(u_{\mathbf{j}}^{\epsilon})^{-1}(X)=((u_{1,j_1}^{\epsilon^1})^{-1}(x_1),\dots, (u_{m,j_m}^{\epsilon^m})^{-1}(x_m)) 
\]
and 
\[
u_{\mathbf{j}}^{\epsilon}(X)=(u_{1,j_1}^{\epsilon^1}(x_1), \dots ,u_{m,j_m}^{\epsilon^m}(x_m)). 
\]
Then, the self-referential equation associated with the multizipper FIF is given by
\begin{align}\label{2eq12}
f^{\epsilon}(X)= \sum_{\mathbf{j}\in \prod\limits_{k=1}^{m}\mathbb{N}_{N_k}} v_{\mathbf{j}}^{\epsilon}((u_{\mathbf{j}}^{\epsilon})^{-1}(X),f^{\epsilon}((u_{\mathbf{j}}^{\epsilon})^{-1}(X)))\chi_{u_{\mathbf{j}}^{\epsilon}(\mathcal{I})}(X), \quad\forall X\in \mathcal{I}.
\end{align}
The above equation can be rewritten as
\begin{align}\label{2eq13}
\begin{split}
 f^{\epsilon}(u_{\mathbf{j}}^{\epsilon}(X))=  v_{\mathbf{j}}^{\epsilon}(X,f^{\epsilon}(X)), \quad\forall X\in u_{\mathbf{j}}(\mathcal{I}).
    \end{split}
\end{align}
This unique fixed point $f^{\epsilon}$ interpolates the data points $\Delta$. For the    graph of $f^{\epsilon}$, $G=\{(X,f^{\epsilon}):X\in \mathcal{I}\}$, we obtain by\eqref{2eq8} and \eqref{2eq13},
\begin{align*}
\bigcup_{\mathbf{j}\in \prod\limits_{k=1}^{m}\mathbb{N}_{N_k}}W_{\mathbf{j}}^{\epsilon}(G)
        & =\bigcup_{{\mathbf{j}}\in \prod\limits_{k=1}^{m}\mathbb{N}_{N_k}} \{W_{\mathbf{j}}^{\epsilon}(X,f^{\epsilon}(X)): X\in \mathcal{I}\}\\
        & = \bigcup_{{\mathbf{j}}\in \prod\limits_{k=1}^{m}\mathbb{N}_{N_k}} \{(u_{\mathbf{j}}^{\epsilon}(X),v_{\mathbf{j}}^{\epsilon}(X,f^{\epsilon}(X))): X\in \mathcal{I}\}\\
        &=\bigcup_{\mathbf{j}\in \prod\limits_{k=1}^{m}\mathbb{N}_{N_k}}\{(u_{\mathbf{j}}^{\epsilon}(X),f^{\epsilon}(u_{\mathbf{j}}^{\epsilon}(X)): X\in \mathcal{I}\}\\
        &=\{(X,f^{\epsilon}(X)):X\in \mathcal{I}\}=G.
\end{align*}

\noindent The unique fixed point $f^{\epsilon}$ of  $T^{\epsilon}$ is called a {multivariate zipper FIF} corresponding to the IFS \eqref{2eq9}.
\end{proof}

\begin{remark} Note that in the construction of a multivariate zipper FIF,
we have to assign $\epsilon^k$ is either a zero or one column matrix for
$k=1,2,\dots,m.$ Then, we can obtain $2^m$- multivariate FIFs  by zipper methodology for the same set of scalings. When all $\epsilon = \bf{0}$, then the  multivariate zipper fractal function reduces to a simple multivariate fractal function \cite{Pandey_vishwanathan}.
\end{remark}
\subsection{Multivariate Zipper $\alpha$-Fractal Functions}\label{2subsec2.2}

For a given multivariate function $f\in C(\mathcal{I})$, consider a grid
\[
\Delta := \left\{(x_{1,j_1},\dots ,x_{m,j_m})\in \cI: {\mathbf{j}}\in \prod\limits_{k=1}^{m}\mathbb{N}_{N_k,0}\right\},
\]
on its domain where $a_k :=x_{k,0}< \dots <x_{k,N_k}=:b_k$ for each $k\in \mathbb{N}_{m}$.  First, we construct a continuous function $b:\mathcal{I} \to \mathbb{R}$ satisfying the conditions
\begin{equation}\label{2eq14}
  b(x_{1,j_1}, \dots ,x_{m,j_m})=f(x_{1,j_1},\dots ,x_{m,j_m}),\; \quad\forall \; \j\in \prod\limits_{k=1}^{m}\partial\mathbb{N}_{N_k,0},
\end{equation} 

For $ k\in \mathbb{N}_{m}$,  we define affine  maps $u_{k,j_k}^{\epsilon^k}: I_k\to I_{k,j_k}$ by
\begin{equation}\label{2eq15}
   u_{k,j_k}^{\epsilon^k}(x) :=a_{k,j_k}(x)+b_{k,j_k}, \quad j_k\in \mathbb{N}_{N_k},
\end{equation}
where $a_{k,j_k}$ and $b_{k,j_k}$ are chosen so that each map $u_{k,j_k}$ satisfies  \eqref{2eq1} and \eqref{2eq2}.

For ${\mathbf{j}}\in \prod\limits_{k=1}^{m}\mathbb{N}_{N_k}$, further define continuous variable scaling functions   
\begin{align}\label{2eq015}
  \alpha_{\mathbf{j}}: \mathcal{I} \to \mathbb{R}  
\end{align}
satisfying
\begin{itemize}
    \item[(i)] $\| \alpha_{\mathbf{j}}\|_{\infty}<1$,
    \item[(ii)] for all $j_k \in\intr\mathbb{N}_{N_k,0}$ and $(u_{k,j_k}^{\epsilon^k})^{-1}(x_{k,j_k}) = (u_{k,j_k+1}^{\epsilon^k})^{-1}(x_{k,j_k})=x_k^{*}$, $ (x_1, \dots ,x_m)\in \mathcal{I}$, 
\begin{align*}
\begin{split}
   \alpha_{j_1\cdots j_k\cdots j_m}(x_1,\cdots,x_{k-1},x_{k}^*,x_{k+1},\cdots,x_m, y)   =\alpha_{j_1\cdots j_k+1\cdots j_m}(x_1,\cdots,x_{k-1},\\
    x_{k}^*,x_{k+1},\cdots,x_m, y),
    (x_1,\dots,x_{k-1},x_{k+1},\dots ,x_m)\in  \prod\limits_{i=1,i\ne k}^{m}I_i,y \in \mathbb{R}.
\end{split}
\end{align*}
\end{itemize}
Further, define  $v_{\mathbf{j}}^{\epsilon}:\mathcal{I} \to \mathbb{R}$ by 
\begin{align}\label{2eq16}
    v_{\mathbf{j}}^{\epsilon}(X,y) :=f\left(u_{1,j_1}^{\epsilon^1}(x_1),\cdots,u_{m,j_m}^{\epsilon^m}(x_m)\right)+ \alpha_{\mathbf{j}}(X)(y-b(X)).    
\end{align}
Then, for all  ${\mathbf{j}}\in \prod\limits_{k=1}^{m}\mathbb{N}_{N_k}$, ${\mathbf{l}}=(l_1,\cdots,l_m)\in \prod\limits_{k=1}^{m}\partial \mathbb{N}_{N_k}$, we get 
\begin{align*}
v_{\mathbf{j}}^{\epsilon}(x_{1,l_1},\cdots,x_{m,l_m}, f(x_{1,l_1},\cdots,x_{m,l_m})) &=f(u_{1,j_1}^{\epsilon^1}(x_{1,l_1}),\cdots ,u_{m,j_m}^{\epsilon^m}(x_{m,l_m}))\\
       &=f(x_{1,\tau_1(j_1,l_1)},\cdots,x_{m,\tau_m(j_m,l_m)})\\
        &=y_{\tau_1(j_1,l_1)\cdots \tau_m(j_m,l_m)}).
\end{align*}
In other words, $ v_{\mathbf{j}}^{\epsilon}$ satisfies \eqref{2eq6}.

Now, suppose that $j_k \in\intr\mathbb{N}_{N_k,0}$, $1\leq k \leq m $, and that
\[
x_k^*:=(u_{k,j_k}^{\epsilon^k})^{-1}(x_{k,j_k})=(u_{k,j_k+1}^{\epsilon^k})^{-1}(x_{k,j_k}).
\]
For any $y\in \mathbb{R}$,
\begin{align*}
& v_{j_1,\ldots,j_{k-1},  j_k, j_{k+1},\ldots, j_m}^\epsilon(x_1,\ldots,x_{k-1},x_k^*,x_{k+1},\ldots,x_m,y) \\
& \qquad = f(u_{1,j_1}^{\epsilon^1}(x_1),\ldots,u_{k-1,j_{k-1}}^{\epsilon^{k-1}}(x_{k-1}),u_{k,j_k}^{\epsilon^k}(x_k^*),u_{k+1,j_{k+1}}^{\epsilon^{k+1}}(x_{k+1}),\ldots,u_{m,j_m}^{\epsilon^m}(x_m))\\
& \qquad\quad +\alpha_{j_1,\cdots,j_k,\cdots, j_m}(x_1,\cdots,x_k^*,\cdots,x_m)(y-b(x_1,\ldots,x_m))\\
& \qquad= f(u_{1,j_1}^{\epsilon^1}(x_1),\ldots,u_{k-1,j_{k-1}}^{\epsilon^{k-1}}(x_{k-1}),x_{k,j_k},u_{k+1,j_{k+1}}^{\epsilon^{k+1}}(x_{k+1}),\ldots,u_{m,j_m}^{\epsilon^m}(x_m))\\
& \qquad\quad + \alpha_{j_1,\cdots ,j_m}(x_1,\cdots,x_k^*,\cdots,x_m)(y-b(x_1,\ldots,x_m))\\
& \qquad =f(u_{1,j_1}^{\epsilon^1}(x_1),\ldots,u_{k-1,j_{k-1}}^{\epsilon^{k-1}}(x_{k-1}),u_{k,j_k+1}^{\epsilon_k}(x_{k,j_k}),u_{k+1,j_{k+1}}^{\epsilon^{k+1}}(x_{k+1}),\ldots,u_{m,j_m}^{\epsilon^m}(x_m))\\
& \qquad\quad + \alpha_{j_1,\cdots,j_k+1,\cdots, j_m}(x_1,\cdots,x_k^*,\cdots,x_m)(y-b(x_1,\ldots,x_m))\\
& \qquad =v_{j_1,\ldots,j_{k-1}, j_k+1, j_{k+1},\ldots,j_m}^{\epsilon} (x_1,\ldots,x_{k-1},x_k^*,x_{k+1},\ldots,x_m,y)).
\end{align*}
Therefore,  $v_{\mathbf{j}}^{\epsilon}$ satisfies \eqref{2eq0}, \eqref{2eq6}, and \eqref{2eq7}, for all ${\mathbf{j}}\in \prod\limits_{k=1}^{m}\mathbb{N}_{N_k}$.
Theorem \ref{2eq0} now implies that the 
 IFS 
 \begin{align}\label{2eq000}
 I^{\epsilon}=\left\{ \mathcal{K},W_{\mathbf{j}}^{\epsilon}:\mathbf{j}\in \prod\limits_{k=1}^{m}\mathbb{N}_{N_k}\right\}
 \end{align}
 defined in \eqref{2eq9}, where the maps $u_{k,j_k}^{\epsilon_k}$ and $v_{\mathbf{j}}^{\epsilon}$ are now defined as in \eqref{2eq15} and \eqref{2eq16}, determines a fractal function referred to as a {multivariate zipper $\alpha$-fractal function} and denoted by $f_{\Delta,b}^{\alpha,\epsilon}$.
 
The fractal function $f_{\Delta,b}^{\alpha,\epsilon}$ is the fixed point of the RB operator $ T^{\epsilon}:\mathcal{G}\to \mathcal{G} $ given by 
\begin{align}\label{2eq17}
T^{\epsilon}g(X) = f(X)+ \sum_{\j\in \prod\limits_{k=1}^{m}\mathbb{N}_{N_k}} \alpha_{\j} ((u_{\mathbf{j}}^{\epsilon})^{-1}(X))(f-b)((u_{\mathbf{j}}^{\epsilon})^{-1}(X)) \chi_{u_{\j}^{\epsilon}(\mathcal{I})}(X),
\forall X\in \mathcal{I}.
\end{align}
The fixed point $f_{\Delta,b}^{\alpha,\epsilon}$ satisfies the self-referential equation
\begin{align}\label{2eq18}
\begin{split}
   f_{\Delta,b}^{\alpha,\epsilon}(X) = f(X)+  \sum_{\mathbf{j}\in \prod\limits_{k=1}^{m}\mathbb{N}_{N_k}} \alpha_{\mathbf{j}} ((u_{\mathbf{j}}^{\epsilon})^{-1}(X))(f_{\Delta,b}^{\alpha,\epsilon}-b) ((u_{\mathbf{j}}^{\epsilon})^{-1}(X)) \chi_{u_{\mathbf{j}}^{\epsilon}(\mathcal{I})}(X),\\
   \nonumber \forall X\in \mathcal{I}.
\end{split}
\end{align}
or, equivalently,
\begin{equation}\label{2eq19}
f_{\Delta,b}^{\alpha,\epsilon}(u_{\mathbf{j}}^{\epsilon}(X))=f(u_{\mathbf{j}}^{\epsilon}(X))+\alpha_{\mathbf{j}}(X)(f_{\Delta,b}^{\alpha,\epsilon}(X)-b(X)),
\end{equation}
for all $X\in \prod\limits_{k=1}^{m}I_{k,j_k}$, $\mathbf{j}\in \prod\limits_{k=1}^{m}\mathbb{N}_{N_k}$.


We can easily establish the following inequality from \eqref{2eq19}:
\begin{align}\label{2eq21}
\|f_{\Delta,b}^{\alpha,\epsilon}-f\|_{\infty}\leq \dfrac{\|\alpha\|_{\infty}}{1-\|\alpha\|_{\infty}} \|f-b\|_{\infty},
\end{align}
 where $\|\alpha\|_{\infty}:=\max\left\{\|\alpha_{\j}\|_{\infty}: \j\in \prod\limits_{k=1}^{m}\mathbb{N}_{N_k}\right\}$.
 
 \noindent From \eqref{2eq21}, we observe that $\|f_{\Delta,b}^{\alpha,\epsilon}-f\|_{\infty} \to 0$ as $\|\alpha\|_{\infty} \to 0$.

\section{Multivariate Bernstein Zipper Fractal Function}\label{2sec3}

To obtain convergence of a multivariate $\alpha$-fractal function $f_{\Delta,b}^{\alpha,\epsilon}$ to $f$ without altering the scaling function $\alpha$, we take as base functions $b$ multivariate Bernstein polynomials $B_{\n}f(X)$\cite{Foupouagnigni_Wouodjie,Davis} of $f$.
The $\n:=(n_1,\dots,n_m)$-th Bernstein polynomial for  $f\in C(\mathcal{I})$ is given by
\begin{align}\label{2eq22}
 B_{\n}f(X) &=  \sum _{k_1=0}^{n_1}\cdots\sum _{k_m=0}^{n_m}  f\left(x_{1,0} +(x_{1,N_1}-x_{1,0}) \dfrac{k_1}{n_1},\dots,x_{m,0}\right.\nonumber \\ 
 & \qquad + (x_{m,N_m}-x_{m,0}) \left.\dfrac{k_m}{n_m}\right) \prod\limits _{r=1}^{m} b_{k_r,n_r}(x_r) ,
\end{align} 
where 
\begin{align*}
b_{k_r,n_r}(x_r) &=\binom{n_r}{k_r}\frac{(x_r-x_{r,0})^{k_r}(x_{r,N_r}-x_r)^{n_r-k_r}}{(x_{r,N_r}-x_{r,0})^{n_r}},\quad 0\leq k_r \leq n_r,\\
\end{align*}
for $r=1,...,m$ and $n_1,...,n_m\in \mathbb{N}$.

If we use as base function $b(X):= B_{\n}f(X)$ in \eqref{2eq16}, then  the IFS \eqref{2eq000} becomes
\begin{align}\label{2eq23}
  I_{\n}^{\epsilon}=\left\{ K,W_{\j}^{\epsilon}: \j\in \prod\limits_{k=1}^{m}\mathbb{N}_{N_k}\right\}, 
\end{align}
where
\[
W_{\mathbf{j}}^{\epsilon}(X,y) :=\left(u_{1,j_1}^{\epsilon^1}(x_1),\dots,u_{m,j_m}^{\epsilon^m}(x_m), v_{\mathbf{j}}^{\epsilon}(X,y)\right), 
\]
and
\[
v_{\mathbf{j}}^{\epsilon}(X,y) :=f\left(u_{1,j_1}^{\epsilon^1}(x_1),\dots,u_{m,j_m}^{\epsilon^m}(x_m)\right)+\alpha_{\mathbf{j}}(X)(y-B_{\n}f(X),
\]
for all $\mathbf{j}\in \prod\limits_{k=1}^{m}\mathbb{N}_{N_k}$. This IFS determines 
a  multivariate zipper $\alpha$-fractal function 
\[
f_{\Delta,B_{\n}}^{\alpha,\epsilon} :=f_{\Delta;\n}^{\alpha,\epsilon}:=f_{\n}^{\alpha,\epsilon} 
\]
(we use these three notations interchangeably) referred to as a {multivariate  Bernstein zipper $\alpha$-fractal function} corresponding to the continuous function $f:\mathcal{I} \to \mathbb{R}$. It satisfies the self-referential equation
\begin{align}\label{2eq24}
f_{\Delta;\n}^{\alpha,\epsilon}\circ u_{\mathbf{j}}^{\epsilon} = f\circ u_{\mathbf{j}}^{\epsilon} + \alpha_{\mathbf{j}}\left(f_{\Delta;\n}^{\alpha,\epsilon} - B_{\n}f\right),\text{ on $X$ and for all $\mathbf{j}\in \prod\limits_{k=1}^{m}\mathbb{N}_{N_k}$}.
\end{align}

\begin{definition}
Define an operator $\mathcal{F}_{\Delta,B_{\n}}^{\alpha,\epsilon}:C(\mathcal{I})\to C(\mathcal{I})$ by 
\begin{center}
    $\mathcal{F}_{\Delta,B_{\n}}^{\alpha,\epsilon}(f) :=f_{\Delta,B_{\n}}^{\alpha,\epsilon}=f_{\Delta;\n}^{\alpha,\epsilon}$,
\end{center}
where $\Delta$ is the set of data points, $B_{\n}$ a multivariate  Bernstein operator and $\alpha$ is scaling function. We call this operator a multivariate  Bernstein zipper $\alpha$-fractal operator.
\end{definition}

\begin{theorem}\label{2th3.1}
The multivariate Bernstein zipper $\alpha$-fractal operator
 \[
 \mathcal{F}_{\Delta,B_{\n}}^{\alpha,\epsilon}:C(\mathcal{I})\to C(\mathcal{I})
 \]
 is linear and bounded.
\end{theorem}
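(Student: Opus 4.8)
The plan is to recognize $\mathcal{F}_{\Delta,B_{\n}}^{\alpha,\epsilon}$ as the composition of two bounded linear operators on $C(\mathcal{I})$; this settles linearity and boundedness at one stroke and, as a bonus, yields an explicit bound on the operator norm. Introduce two maps $\Phi,\Psi\colon C(\mathcal{I})\to C(\mathcal{I})$ by
\[
(\Phi h)(X):=h(X)-\sum_{\j}\alpha_{\j}((u_{\j}^{\epsilon})^{-1}(X))\,(B_{\n}h)((u_{\j}^{\epsilon})^{-1}(X))\,\chi_{u_{\j}^{\epsilon}(\mathcal{I})}(X),
\]
\[
(\Psi h)(X):=\sum_{\j}\alpha_{\j}((u_{\j}^{\epsilon})^{-1}(X))\,h((u_{\j}^{\epsilon})^{-1}(X))\,\chi_{u_{\j}^{\epsilon}(\mathcal{I})}(X),
\]
the sums running over $\j\in\prod_{k=1}^{m}\mathbb{N}_{N_k}$. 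With these, the Read--Bajraktarevi\'c operator \eqref{2eq17} attached to the germ $f$ and base function $B_{\n}f$ is precisely $g\mapsto\Phi f+\Psi g$. Both $\Phi$ and $\Psi$ are linear: $\Psi$ obviously, and $\Phi$ because the multivariate Bernstein operator $B_{\n}$ is linear.

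First I would confirm that $\Phi$ and $\Psi$ genuinely map $C(\mathcal{I})$ into $C(\mathcal{I})$. Continuity of $\Phi h$ and $\Psi h$ on each sub-box $\prod_{k}I_{k,j_k}$ is clear, and continuity across the interior grid hyperplanes is exactly the well-definedness computation already performed for $T^{\epsilon}$ in the proof of Theorem~\ref{2th0}: one invokes \eqref{2eq3} to see that the two relevant pre-images coincide at $x_k^{*}$, the compatibility hypothesis on the scaling functions $\alpha_{\j}$ to match the scalings there, and (for $\Phi$) the continuity of $h$ and of $B_{\n}h$. Then come the norm estimates. For $X$ in the $\j$-th sub-box, $|(\Psi h)(X)|\le\|\alpha_{\j}\|_{\infty}\|h\|_{\infty}\le\|\alpha\|_{\infty}\|h\|_{\infty}$, so $\|\Psi\|_{\mathrm{op}}\le\|\alpha\|_{\infty}<1$. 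Similarly, since the Bernstein operator is positive and reproduces constants one has $\|B_{\n}h\|_{\infty}\le\|h\|_{\infty}$, hence $|(\Phi h)(X)|\le\|h\|_{\infty}+\|\alpha\|_{\infty}\|B_{\n}h\|_{\infty}\le(1+\|\alpha\|_{\infty})\|h\|_{\infty}$ for all $X$, so $\|\Phi\|_{\mathrm{op}}\le 1+\|\alpha\|_{\infty}$.

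By construction $f_{\Delta;\n}^{\alpha,\epsilon}=\mathcal{F}_{\Delta,B_{\n}}^{\alpha,\epsilon}(f)$ is the fixed point of $T^{\epsilon}$, that is, $f_{\Delta;\n}^{\alpha,\epsilon}=\Phi f+\Psi f_{\Delta;\n}^{\alpha,\epsilon}$ by \eqref{2eq18}, equivalently $(\mathrm{Id}-\Psi)f_{\Delta;\n}^{\alpha,\epsilon}=\Phi f$. Since $\|\Psi\|_{\mathrm{op}}\le\|\alpha\|_{\infty}<1$, the operator $\mathrm{Id}-\Psi$ is boundedly invertible on $C(\mathcal{I})$, with $(\mathrm{Id}-\Psi)^{-1}=\sum_{r\ge0}\Psi^{r}$ and $\|(\mathrm{Id}-\Psi)^{-1}\|_{\mathrm{op}}\le(1-\|\alpha\|_{\infty})^{-1}$. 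Therefore
\[
\mathcal{F}_{\Delta,B_{\n}}^{\alpha,\epsilon}=(\mathrm{Id}-\Psi)^{-1}\circ\Phi,
\]
a composition of bounded linear operators, hence linear and bounded, with $\|\mathcal{F}_{\Delta,B_{\n}}^{\alpha,\epsilon}\|_{\mathrm{op}}\le\dfrac{1+\|\alpha\|_{\infty}}{1-\|\alpha\|_{\infty}}$, which in particular re-derives the estimate \eqref{2eq21}.

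I expect no serious obstacle; the only point needing a little care is checking that $\Phi h$ and $\Psi h$ are continuous on the whole hyperrectangle $\mathcal{I}$ rather than merely piecewise, and this is settled by the same gluing argument used for $T^{\epsilon}$ in Theorem~\ref{2th0}. If one prefers to avoid the operator-theoretic formulation, linearity can be obtained directly: for scalars $\lambda,\mu$ and $f,g\in C(\mathcal{I})$, both $\mathcal{F}_{\Delta,B_{\n}}^{\alpha,\epsilon}(\lambda f+\mu g)$ and $\lambda\,\mathcal{F}_{\Delta,B_{\n}}^{\alpha,\epsilon}(f)+\mu\,\mathcal{F}_{\Delta,B_{\n}}^{\alpha,\epsilon}(g)$ satisfy the self-referential equation \eqref{2eq24} associated with the germ $\lambda f+\mu g$ (using linearity of $B_{\n}$) and lie in the corresponding complete metric space, so they agree by uniqueness of the fixed point; boundedness then follows from \eqref{2eq21} together with $\|B_{\n}f\|_{\infty}\le\|f\|_{\infty}$.
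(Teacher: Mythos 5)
Your argument is correct, but it is worth noting that the paper itself offers no proof at all here: it simply states that the argument is "the same as in the univariate case" and cites \cite{Chand_Navascues}. The route taken there (and in the univariate $\alpha$-fractal literature generally) is essentially your closing alternative: linearity via uniqueness of the fixed point of the Read--Bajraktarevi\'c operator attached to the germ $\lambda f+\mu g$, and boundedness via the perturbation estimate \eqref{2eq21}/\eqref{2eq25} combined with $\|B_{\n}f\|_{\infty}\le\|f\|_{\infty}$. Your main argument is a cleaner operator-theoretic repackaging: writing $T^{\epsilon}g=\Phi f+\Psi g$ and solving $(\mathrm{Id}-\Psi)\,\mathcal{F}_{\Delta,B_{\n}}^{\alpha,\epsilon}(f)=\Phi f$ by a Neumann series gives linearity and boundedness simultaneously, makes the norm bound $\frac{1+\|\alpha\|_{\infty}}{1-\|\alpha\|_{\infty}}$ explicit, and re-derives \eqref{2eq21} as a by-product, at the modest extra cost of checking that $\Phi$ and $\Psi$ map $C(\mathcal{I})$ into itself (which, as you say, is exactly the gluing computation of Theorem~\ref{2th0}, using \eqref{2eq3} and the matching condition (ii) on the $\alpha_{\j}$ in \eqref{2eq015}). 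Two small points of care: the formula \eqref{2eq17} as printed has $(f-b)$ where the fixed-point equation \eqref{2eq18} shows it must be read as $(g-b)$ --- your identification $T^{\epsilon}g=\Phi f+\Psi g$ silently (and correctly) uses this reading; and since the paper's fixed point is obtained in the affine subspace $\mathcal{G}$ of interpolants rather than all of $C(\mathcal{I})$, one should remark, as your injectivity of $\mathrm{Id}-\Psi$ on all of $C(\mathcal{I})$ indeed guarantees, that the unique solution of $(\mathrm{Id}-\Psi)h=\Phi f$ in $C(\mathcal{I})$ coincides with that fixed point, so the factorization $\mathcal{F}_{\Delta,B_{\n}}^{\alpha,\epsilon}=(\mathrm{Id}-\Psi)^{-1}\circ\Phi$ is legitimate.
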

\begin{proof}
The proof of this theorem is the same as in the univariate case for the $\alpha$-fractal operator in \cite{Chand_Navascues}.\end{proof}
Without altering $\alpha$, we otbain the following convergence result.
\begin{theorem}\label{2th3.2}
Let $f\in C(\mathcal{I})$. Then the multivariate Bernstein zipper $\alpha$-fractal function $f_{\Delta,B_{\n}}^{\alpha,\epsilon}$ converges uniformly to $f$ as $n_i \to \infty $, for all $1\leq i \leq m$.
\end{theorem}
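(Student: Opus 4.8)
The plan is to deduce the statement directly from the a~priori bound \eqref{2eq21}, combined with the classical uniform approximation property of multivariate Bernstein polynomials. Recall first that for the base function $b = B_{\n}f$ the hypothesis \eqref{2eq14} holds automatically, since the univariate Bernstein basis functions satisfy $b_{k_r,n_r}(x_{r,0}) = \delta_{k_r,0}$ and $b_{k_r,n_r}(x_{r,N_r}) = \delta_{k_r,n_r}$, so $B_{\n}f$ agrees with $f$ at every corner of $\mathcal{I}$; hence the construction of Section~\ref{2sec3} applies and \eqref{2eq21} specializes to
\[
\bigl\| f_{\Delta,B_{\n}}^{\alpha,\epsilon} - f \bigr\|_{\infty} \le \frac{\|\alpha\|_{\infty}}{1-\|\alpha\|_{\infty}}\, \bigl\| f - B_{\n}f \bigr\|_{\infty}.
\]
The crucial observation is that $\|\alpha\|_{\infty} = \max_{\mathbf{j}} \|\alpha_{\mathbf{j}}\|_{\infty}$ is a fixed number strictly less than $1$ that does not depend on $\n$; therefore the prefactor is a finite constant independent of $\n$, and the whole problem reduces to showing $\|f - B_{\n}f\|_{\infty} \to 0$ as $n_i \to \infty$ for every $1\le i\le m$.

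For this reduction step I would invoke (or, if a self-contained argument is preferred, reprove) the multivariate Weierstrass--Bernstein theorem, available in \cite{Foupouagnigni_Wouodjie,Davis}. The direct argument goes as follows: after the affine rescaling sending each $I_k$ to $[0,1]$ one uses the partition-of-unity identity $\sum_{k_1,\dots,k_m} \prod_{r=1}^m b_{k_r,n_r}(x_r) = 1$ to write $B_{\n}f(X) - f(X)$ as a convex combination of the increments $f(\mathbf{k}/\n) - f(X)$, splits the index set according to whether $\|\mathbf{k}/\n - X\|$ is below or above a threshold $\delta$, estimates the near terms by the modulus of continuity $\omega_f(\delta)$ (finite and tending to $0$ as $\delta\to 0$ by uniform continuity of $f$ on the compact cube), and controls the total weight of the far terms using the univariate second-moment bounds $\sum_{k_r}(x_r - k_r/n_r)^2 b_{k_r,n_r}(x_r) = x_r(1-x_r)/n_r \le 1/(4n_r)$ together with Chebyshev's inequality. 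Optimizing $\delta = \delta(\n)\to 0$ then yields a bound of the shape $\|f - B_{\n}f\|_{\infty} \le C\bigl(\omega_f(\delta) + \|f\|_{\infty}\sum_{r=1}^m (n_r\delta^2)^{-1}\bigr)$, which goes to $0$ provided all $n_i\to\infty$.

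Putting the two displays together gives $\|f_{\Delta,B_{\n}}^{\alpha,\epsilon} - f\|_{\infty} \to 0$, which is the claim. I do not foresee a genuine obstacle: the only subtlety is that one must work in the regime where \emph{all} $n_i$ tend to infinity simultaneously (convergence of $B_{\n}f$ can fail if some $n_i$ stays bounded), which is precisely the hypothesis of the theorem; and one must emphasize that the factor $\|\alpha\|_{\infty}/(1-\|\alpha\|_{\infty})$ is independent of $\n$, which is exactly why the scaling functions need not be altered as $\n$ grows — in contrast with the generic $\alpha$-fractal setting where convergence to $f$ requires $\|\alpha\|_{\infty}\to 0$.
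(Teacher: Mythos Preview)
Your proof is correct and follows essentially the same approach as the paper: both obtain the key bound $\|f_{\Delta,B_{\n}}^{\alpha,\epsilon}-f\|_{\infty}\le \frac{\|\alpha\|_{\infty}}{1-\|\alpha\|_{\infty}}\|f-B_{\n}f\|_{\infty}$ (you by specializing \eqref{2eq21}, the paper by rederiving it from the self-referential equation \eqref{2eq24}) and then invoke the uniform convergence $B_{\n}f\to f$ from \cite{Foupouagnigni_Wouodjie}. Your additional remarks---verifying the corner condition \eqref{2eq14} for $B_{\n}f$, emphasizing that $\|\alpha\|_{\infty}$ is independent of $\n$, and sketching a self-contained proof of the Bernstein convergence---are helpful elaborations but do not constitute a different route.
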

\begin{proof}
From  \eqref{2eq24}, we get 
\begin{align*}
\|f_{\Delta,B_{\n}}^{\alpha,\epsilon}-f\|_{\infty} &\leq  \|\alpha\|_{\infty}  \|f_{\Delta,B_{\n}}^{\alpha,\epsilon}-B_{\n}f\|_{\infty}\\
   & \leq \|\alpha\|_{\infty}\|f_{\Delta,B_{\n}}^{\alpha,\epsilon}-f\|_{\infty}+\|\alpha\|_{\infty}\|f-B_{\n}f\|_{\infty}
\end{align*}
Hence,
\begin{align}\label{2eq25}
     \|f_{\Delta,B_{\n}}^{\alpha,\epsilon}-f\|_{\infty}\leq \dfrac{\|\alpha\|_{\infty}}{1-\|\alpha\|_{\infty}} \|f-B_{\n}f\|_{\infty}.
\end{align}
By Ref. \cite{Foupouagnigni_Wouodjie}, we know that  $\|f-B_{\n}f\|_{\infty} \to 0 $ as $n_i \to \infty $, for all $1\leq i \leq m$. Employing this in \eqref{2eq25}, we obtain $\|f_{\Delta,B_{\n}}^{\alpha,\epsilon}-f\|_{\infty} \to 0$, as $n_i \to \infty $ for all $1\leq i \leq m$. Therefore, $f_{\Delta,B_{\n}}^{\alpha,\epsilon}$ converges uniformly to $f$ as  $n_i \to \infty $ for all $1\leq i \leq m$.
\end{proof}
\begin{example}
In this example,  we  provide an illustration of Theorem \ref{2th3.2}. Let  $f(x) :=\sin(\frac{\pi}{2}xy)$ in $ \mathcal{I} :=I_1\times I_2$ where $ I_1 =I_2 :=[0,1]$, $\alpha_{j_1, j_2}=0.5$ for all $ (j_1,j_2) \in \mathbb{N}_{3}\times  \mathbb{N}_{3}$. Consider the grid on $\R^2$ given by 
\begin{align*}
    \Delta :=\left\{ (x_i, y_j)\in \R^2 : x_i ,  y_j \in  \{0, \tfrac{1}{3}, \tfrac{2}{2}, 1\}\right\}.
    \end{align*}
The original bivariate function  $f(x)=\sin(\frac{\pi}{2}xy)$  is constructed in 
Fig. \ref{2fig1}(a). For the interpolation data of $f$ on $\Delta$, we 
have constructed fractal functions 
in Figs.  \ref{2fig1}(b)-(e) corresponding to different values of the signature.
    Fig. \ref{2fig1}(f) is the plot of  $f_{\Delta,B_{20,20}}^{\alpha,\epsilon}$
    with binary signature matrix 1.   One can observe from Figs. \ref{2fig1}(d) and \ref{2fig1}(f) that $f_{\Delta,B_{20,20}}^{\alpha,\epsilon}$ provides a better approximation for $f\in C(\cI)$ than that by $f_{\Delta,B_{3,3}}^{\alpha,\epsilon}$.
\end{example}

\begin{figure}
     \centering
     \begin{subfigure}{0.49\textwidth}
         \centering
         \includegraphics[width=\textwidth]{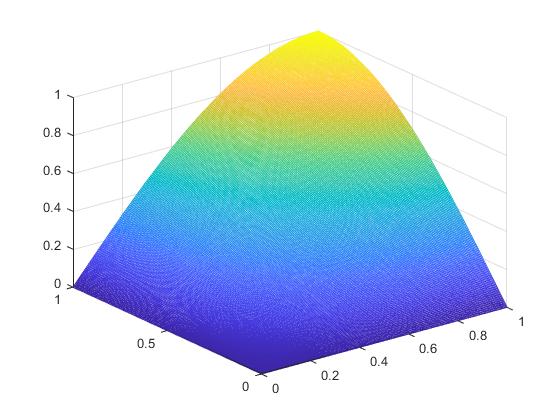}
         \caption{$f(x_1,x_2)=\sin(\frac{\pi}{2}x_1x_2)$}
         \label{2fig1a}
     \end{subfigure}
     \hfill
     \begin{subfigure}{0.49\textwidth}
         \centering
         \includegraphics[width=\textwidth]{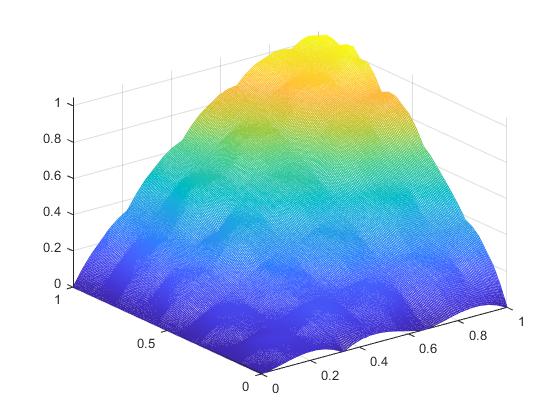}
         \caption{$f_{\Delta,B_{3,3}}^{\alpha,\epsilon} \;\text{when}\;\epsilon=(0,1)$}
         \label{2fig1b}
     \end{subfigure}
     \hfill
      \begin{subfigure}{0.49\textwidth}
         \centering
         \includegraphics[width=\textwidth]{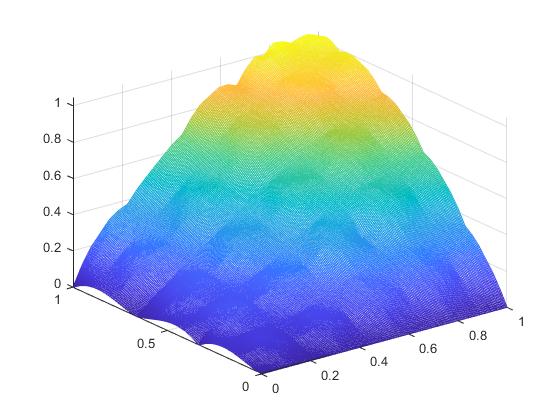}
         \caption{$f_{\Delta,B_{3,3}}^{\alpha,\epsilon}\;\text{when}\;\epsilon=(1,0)$}
         \label{2fig1c}
     \end{subfigure}
     \hfill
     \begin{subfigure}{0.49\textwidth}
         \centering
         \includegraphics[width=\textwidth]{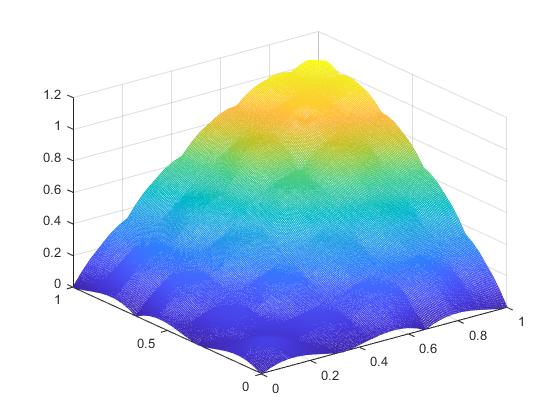}
         \caption{$f_{\Delta,B_{3,3}}^{\alpha,\epsilon} \;\text{when}\;\epsilon=(1,1)$}
         \label{2fig1d}
     \end{subfigure}
     \hfill
     \begin{subfigure}{0.49\textwidth}
         \centering
         \includegraphics[width=\textwidth]{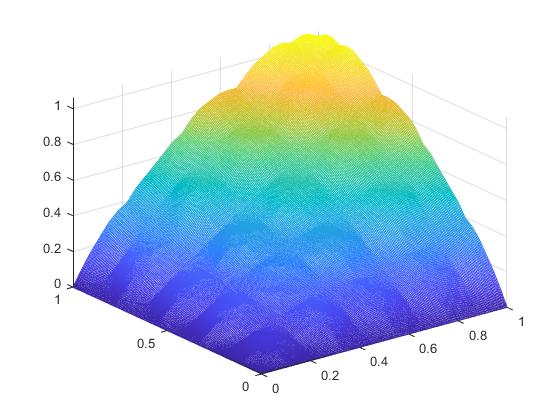}
         \caption{$f_{\Delta,B_{3,3}}^{\alpha,\epsilon}\;\text{when}\;\epsilon=(0,0)$}
         \label{2fig1e}
     \end{subfigure}
     \hfill
     \begin{subfigure}{0.49\textwidth}
         \centering
         \includegraphics[width=\textwidth]{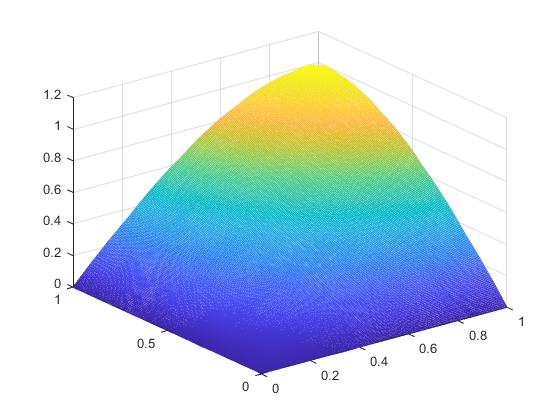}
         \caption{$f_{\Delta,B_{20,20}}^{\alpha,\epsilon} \;\text{when}\;\epsilon=(1,1)$}
         \label{2fig1f}
     \end{subfigure}
     \hfill
        \caption{Multivariate Bernstein zipper $\alpha$-fractal functions}
        \label{2fig1}
\end{figure}
 \section{Constrained Multivariate Bernstein zipper $\alpha$-Fractal Approximation}\label{2sec4}
 In this section, we study constrained approximation by multivariate Bernstein zipper $\alpha$-fractal functions.
 \begin{theorem}\label{2th3}
 Let $f\in C(\mathcal{I})$ and suppose $f(X)\geq 0$ for all $X \in \mathcal{I}$. Consider the set
 \[
 \Delta := \left\{(x_{1,j_1},\dots ,x_{m,j_m}): \mathbf{j} \in \prod\limits_{k=1}^{m}\mathbb{N}_{N_k,0}\right\} 
 \]
where $a_k:=x_{k,0}<\dots<x_{k,N_k}=:b_k$ for each $k\in\N_m$,  $I_k :=[a_k,b_k]$, and $\alpha:\mathcal{I}\to \mathbb{R}$ is a continuous scaling function. Then,  the sequence $\{I_{\n}^{\epsilon}\}$ of IFSs \eqref{2eq23} determines a sequence $\{f^{\alpha,\epsilon}_{\Delta;\n}\}$ of positive multivariate Bernstein zipper $\alpha$-fractal functions that converges uniformly to $f$ if the scaling functions $\alpha_{\mathbf{j}}(X)$ are chosen as in \eqref{2eq015} and according to
\begin{align}\label{2eq26}
\max\left\{ \frac{-\phi^{\epsilon}(f;\mathbf{j})}{C_{\n}-\phi_{\n}},-\frac{C_{\n}-\Phi^{\epsilon}(f;\mathbf{j})}{\Phi_{\n}}\right\}\leq \alpha_{\j}(X) \leq 
     \min\left\{\frac{\phi^{\epsilon}(f;\mathbf{j})}{\Phi_{\n}},\frac{C_{\n}-\Phi^{\epsilon}(f;\mathbf{j})}{C_{\n}-\phi_{\n}}\right\},
 \end{align}
for $\mathbf{j}\in \prod\limits_{k=1}^{m}\mathbb{N}_{N_k}$, where 
\begin{gather*}
\phi^{\epsilon}(f;\mathbf{j}) :=\min_{X\in I}f(u_{\mathbf{j}}^{\epsilon}(X)),
 \quad \Phi^{\epsilon}(f;\mathbf{j}) :=\max_{X\in I}f(u_{\mathbf{j}}^{\epsilon}(X)),\\
 \phi_{\n} :=\min_{X\in I}B_{\n}f(X),\quad \Phi_{\n} :=\max_{X\in I}B_{\n}f(X), 
 \end{gather*}
 and $C_{\n}$ is a positive real number strictly greater than both $\phi_{\n}$ and $\|f\|_{\infty}$.
 \end{theorem}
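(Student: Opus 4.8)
The plan is to run the classical Read--Bajraktarevi\'c invariant-set argument, but inside the capped non-negative cone. First I would fix $\n$ and introduce
\[
\mathcal{G}_0 := \bigl\{g\in\mathcal{G} : 0\le g(X)\le C_{\n}\ \text{for all}\ X\in\mathcal{I}\bigr\},
\]
and check that it is a non-empty closed (hence complete) subset of $(\mathcal{G},\rho)$: the constraint $0\le g\le C_{\n}$ is compatible with the corner interpolation conditions defining $\mathcal{G}$ because $0\le f\le\|f\|_{\infty}<C_{\n}$, and $B_{\n}f$ itself lies in $\mathcal{G}_0$ since the multivariate Bernstein operator is positive, reproduces constants (so $0\le B_{\n}f\le\|f\|_{\infty}<C_{\n}$), and interpolates $f$ at the corners of $\mathcal{I}$. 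On $\mathcal{G}_0$ the operator $T^{\epsilon}$ is still a contraction with ratio $\|\alpha\|_{\infty}<1$.

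The core step is to show $T^{\epsilon}(\mathcal{G}_0)\subseteq\mathcal{G}_0$. Using the operator form of \eqref{2eq24}, for $g\in\mathcal{G}_0$ one has $T^{\epsilon}g(u_{\mathbf{j}}^{\epsilon}(X))=f(u_{\mathbf{j}}^{\epsilon}(X))+\alpha_{\mathbf{j}}(X)\,(g(X)-B_{\n}f(X))$, and since every point of $\mathcal{I}$ is of this form it suffices to bound the right-hand side. I would set $d:=g(X)-B_{\n}f(X)$, note $-\Phi_{\n}\le d\le C_{\n}-\phi_{\n}$ and $\phi^{\epsilon}(f;\mathbf{j})\le f(u_{\mathbf{j}}^{\epsilon}(X))\le\Phi^{\epsilon}(f;\mathbf{j})$, discard the trivial case $f\equiv 0$ (then $\Phi_{\n}=0$), and split on the sign of $\alpha_{\mathbf{j}}(X)$: in each case $\alpha_{\mathbf{j}}(X)\,d$ is squeezed between $-\alpha_{\mathbf{j}}(X)\Phi_{\n}$ and $\alpha_{\mathbf{j}}(X)(C_{\n}-\phi_{\n})$ (in one order or the other). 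Imposing $T^{\epsilon}g(u_{\mathbf{j}}^{\epsilon}(X))\ge 0$ and $T^{\epsilon}g(u_{\mathbf{j}}^{\epsilon}(X))\le C_{\n}$, and dividing by the strictly positive quantities $\Phi_{\n}$ and $C_{\n}-\phi_{\n}$, then reproduces exactly the four inequalities bundled into the maximum and the minimum in \eqref{2eq26}. This closes the invariance claim, so by Banach's theorem $T^{\epsilon}$ has a unique fixed point in $\mathcal{G}_0$; since it already has the fixed point $f^{\alpha,\epsilon}_{\Delta;\n}$ in the larger complete space $\mathcal{G}$, the two coincide, whence $0\le f^{\alpha,\epsilon}_{\Delta;\n}\le C_{\n}$ and, in particular, $f^{\alpha,\epsilon}_{\Delta;\n}\ge 0$ for every $\n$.

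For the uniform convergence I would invoke the a priori bound \eqref{2eq25}, $\|f^{\alpha,\epsilon}_{\Delta;\n}-f\|_{\infty}\le\frac{\|\alpha\|_{\infty}}{1-\|\alpha\|_{\infty}}\|f-B_{\n}f\|_{\infty}$. One first checks that the admissible region \eqref{2eq26} always contains the zero function: its lower endpoints $-\phi^{\epsilon}(f;\mathbf{j})/(C_{\n}-\phi_{\n})$ and $-(C_{\n}-\Phi^{\epsilon}(f;\mathbf{j}))/\Phi_{\n}$ are $\le 0$ (as $\phi^{\epsilon}(f;\mathbf{j})\ge 0$ and $\Phi^{\epsilon}(f;\mathbf{j})\le\|f\|_{\infty}<C_{\n}$), while its upper endpoints are $\ge 0$. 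Hence the scaling functions may be chosen, uniformly in $\n$ and subject to \eqref{2eq015}--\eqref{2eq26}, with $\|\alpha\|_{\infty}\le\kappa$ for a fixed $\kappa<1$; then, since $\|f-B_{\n}f\|_{\infty}\to 0$ as all $n_i\to\infty$ by the approximation property of the multivariate Bernstein operator already used in Theorem \ref{2th3.2}, the right-hand side tends to $0$ and $f^{\alpha,\epsilon}_{\Delta;\n}\to f$ uniformly.

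I expect the main obstacle to be the sign-based case analysis in the invariance step: one must track carefully whether $\alpha_{\mathbf{j}}(X)\ge 0$ and whether $g(X)-B_{\n}f(X)$ attains its extreme value at $-\Phi_{\n}$ or at $C_{\n}-\phi_{\n}$, so that the four resulting sufficient conditions land precisely on the two entries of the maximum and the two entries of the minimum in \eqref{2eq26}; and the degenerate situations $\Phi_{\n}=0$ (which forces $f\equiv 0$) and $C_{\n}=\phi_{\n}$ (excluded by hypothesis) must be set aside first so that all the divisions are legitimate.
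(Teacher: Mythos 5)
Your proposal is correct and follows essentially the same route as the paper: the paper likewise reduces positivity to showing that $v_{\n;\mathbf{j}}^{\epsilon}$ maps $\mathcal{I}\times[0,C_{\n}]$ into $[0,C_{\n}]$ and derives \eqref{2eq26} by the identical sign-split analysis on $\alpha_{\mathbf{j}}(X)$ (its Cases (i) and (ii)), then obtains uniform convergence from \eqref{2eq25} via Theorem \ref{2th3.2}. Your explicit invariant set $\mathcal{G}_0$ with the Banach fixed point argument restricted to it is just a cleaner formalization of the step the paper handles informally through the self-referential equation.
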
 
\begin{proof}
By Theorem \ref{2th3.2}, there exists a sequence $\{f^{\alpha,\epsilon}_{\Delta;\n}\}$, for $n_k \in \mathbb{N} $, of multivariate   Bernstein zipper $\alpha$-fractal functions that converges to $f$ for any given non-negative function
    $f\in C(\mathcal{I})$. By \cite{Foupouagnigni_Wouodjie}, $B_{\n}$ is a positive linear operator and thus $B_{\n}f(X)\geq 0$, for all $X\in \mathcal{I}$, which implies the positivity of  $\Phi_{\n}$.
     
Let $q_{\n;\mathbf{j}}^{\epsilon}(X) :=f(u_{\mathbf{j}}^{\epsilon}(X))-\alpha_{\mathbf{j}}(X)B_{\n}f(X)$.
    By \eqref{2eq24}, we obtain
\begin{align}\label{2eq27}
f^{\alpha,\epsilon}_{\Delta;\n}(u_{\mathbf{j}}^{\epsilon}(X))
        & = f(u_{\mathbf{j}}(X))+\alpha_{\mathbf{j}}(X)(f^{\alpha,\epsilon}_{\textbf{n}}(X)-B_{\n}f(X))\nonumber\\
         & = v_{\n;\mathbf{j}}^{\epsilon}(X,f^{\alpha,\epsilon}_{\Delta;\n}(X)).
\end{align}
As $v_{\n;\mathbf{j}}^{\epsilon}(X,y) \in [0,C_{\n}]$, $\mathbf{j}\in \prod\limits_{k=1}^{m}\mathbb{N}_{N_k}$, for all $(X,y)\in \mathcal{I} \times[0,C_{\n}]$ this implies 
\[
f^{\alpha,\epsilon}_{\Delta;\n}(u_{\mathbf{j}}^{\epsilon}(X)\in[0,C_{\n}], \quad\forall X\in \mathcal{I}.
\]
Therefore, in order to prove that $f^{\alpha}_{\Delta;\n}(X)\in[0,C_{\n}]$, for all $X\in \mathcal{I}$, it suffices to show that $v_{\n;\mathbf{j}}^{\epsilon}(X,y) \in [0,C_{\n}],$ for all $(X,y)\in \mathcal{I}\times[0,C_{\n}]$. 

Suppose that $(X,y)\in \mathcal{I}\times[0,C_{\n}]$ and  $|\alpha_{\mathbf{j}}(X)|<1$. Now there are two cases: 
\vskip 4pt
\noindent
\textbf{Case (i)}: Let $ 0\leq \alpha_{\mathbf{j}}(X)<1$, for all $X\in \mathcal{I}$. 

Then, $ 0\leq y\leq C_{\n}$ gives 
\[
q_{\n;\mathbf{j}}^{\epsilon}(X)\leq \alpha_{\mathbf{j}}(X)y+q_{\n;\mathbf{j}}^{\epsilon}(X)\leq C_{\n}\alpha_{\mathbf{j}}(X)+q_{\n;\mathbf{j}}^{\epsilon}(X).
\]
 Hence, for $ \mathbf{j}\in \mathcal{I}$ and $(X,y)\in \mathcal{I}\times[0,C_{\n}]$,
\begin{equation*}
    0\leq v_{\n;\mathbf{j}}^{\epsilon}(X,y) \leq C_{\n}   
\end{equation*}
   holds if 
\begin{equation}\label{2eq28}
\begin{split}
& f(u_{\mathbf{j}}^{\epsilon}(X))-\alpha_{\mathbf{j}}(X) B_{\n}f(X) \geq 0,\\
& f(u_{\mathbf{j}}^{\epsilon}(X))-\alpha_{\mathbf{j}}(X)B_{\n}f(X) \leq C_{\n}(1-\alpha_{\mathbf{j}}(X)).
\end{split}
\end{equation}
As $f(u_{\mathbf{j}}^{\epsilon}(X))\geq \phi^{\epsilon}(f;\mathbf{j})$ and $B_{\n}f(X)\leq \Phi_{\n},$ we obtain that  
\[
f(u_{\mathbf{j}}^{\epsilon}(X)) -\alpha_{\mathbf{j}}(X)B_{\n}f(X)\geq 0 
\]
provided 
\begin{equation*}
\phi^{\epsilon}(f,\mathbf{j})-\alpha_{\mathbf{j}}(X)\Phi_{\n}\geq 0.
\end{equation*}
Hence $ \alpha_{\mathbf{j}}(X) \leq \frac{\phi^{\epsilon}(f,\j)}{\Phi_{\n}}.$ 

Next, as
$f(u_{\mathbf{j}}^{\epsilon}(X))\leq \Phi^{\epsilon}(f,\mathbf{j})$ and $B_{\n}f(X)\geq \phi_{\n}$, the second inequality in \eqref{2eq28} holds if 
\begin{equation*}
    \begin{split}
        \alpha_{\mathbf{j}}(X)\leq \frac{C_{\n}-\Phi^{\epsilon}(f,\mathbf{j})}{C_{\n}-\Phi_{\n}^{\epsilon}}.
    \end{split}
\end{equation*}
In this case,  for $ \mathbf{j}\in \prod\limits_{k=1}^{m}\mathbb{N}_{N_k}$ and $(X,y)\in \mathcal{I}\times[0,C_{\n}]$, $v_{\n; \mathbf{j}}^{\epsilon}(X,y) \in [0,C_{\n}]$   
is true whenever
\begin{equation*}
    \begin{split}
      \alpha_{\mathbf{j}}(X) \leq \min \left\{ \frac{\phi^{\epsilon}(f,\mathbf{j})}{\Phi_{\n}}, \frac{C_{\n}-\Phi^{\epsilon}(f,\mathbf{j})}{C_{\n}-\phi_{\n}}\right\}.  
    \end{split}
\end{equation*}
\textbf{Case (ii)}: Let $ -1<\alpha_{\mathbf{j}}(X)\leq 0$, for all $X\in I$. 

Then $ 0\leq y\leq C_{\n}$ implies 
\[
C_{\n}\alpha_\mathbf{j}(X)+q_{\n;\mathbf{\mathbf{j}}}^{\epsilon}\leq \alpha_{\mathbf{j}}(X)y+q_{\n;\mathbf{j}}^{\epsilon}\leq q_{\n;\mathbf{j}}^{\epsilon}.
\]
 For  $ \mathbf{j}\in \prod\limits_{k=1}^{m}\mathbb{N}_{N_k}$, $(X,y)\in \mathcal{I} \times[0,C_{\n}]$, the   inequality 
\begin{center}
   $0\leq v_{\n;\mathbf{j}}^{\epsilon}(X,y)=\alpha_{\mathbf{j}}(X)y+q_{\n;\mathbf{j}}^{\epsilon}\leq C_{\n},$ 
\end{center}
 holds if 
\begin{equation}\label{2eq29}
\begin{split}
& f(u_{\mathbf{j}}^{\epsilon}(X))-\alpha_{j}(X)B_{n}f(X)\leq C_{\n},\\
&C_{\n}\alpha_{\mathbf{j}}(X)+f(u_{\mathbf{j}}^{\epsilon}(X))-\alpha_{\mathbf{j}}(X)B_{n}f(X)\geq 0.
\end{split}
\end{equation}
As $f(u_{\mathbf{j}}^{\epsilon}(X))\leq \Phi^{\epsilon}(f,\mathbf{j})$ and $B_{\n}f(X)\leq \Phi_{\n}$,  then from first inequality in  \eqref{2eq29}, we obtain
\begin{equation*}
f(u_{\mathbf{j}}^{\epsilon}(X)-\alpha_{\mathbf{j}}(X)B_{\n}f(X)\leq \Phi^{\epsilon}(f,\mathbf{j})-\alpha_{\mathbf{j}}(X)\Phi_{\n}\leq C_{\n}.
\end{equation*}
The last part of the above inequality reduces to  $\alpha_{\mathbf{j}}(X) \geq -\frac{C_{\mathbf{n}}-\Phi^{\epsilon}(f,\mathbf{j})}{\Phi_{\n}}.$ 

Further, since $B_{\n}f(X)\geq \phi_{\n}$ {and}  $f(u_{\mathbf{j}}^{\epsilon}(X))\geq \phi^{\epsilon}(f,\mathbf{j})$, a simple calculation yields that the second inequality in \eqref{2eq29} holds if $\alpha_{\mathbf{j}}(X)\geq \frac{-\phi^{\epsilon}(f,\mathbf{j})}{C_{\n}-\phi_{\n}}$. In this case, for $\mathbf{\mathbf{j}}\in \prod\limits_{k=1}^{m}\mathbb{N}_{N_k}$ and  $(X,y)\in \mathcal{I}\times[0,C_{\n}]$, $v_{{\n};\mathbf{j}}^{\epsilon}(X,y) \in [0,C_{\n}]$ holds if
\begin{equation*}
\max\left\{ \frac{-\phi^{\epsilon}(f,\mathbf{j})}{C_{\n}-\phi_{\n}},-\frac{C_{\n}-\Phi^{\epsilon}(f,\mathbf{j})}{\Phi_{\n}}\right\}\leq \alpha_{\mathbf{j}}(X).
\end{equation*} 
These two cases imply \eqref{2eq26}.
\end{proof}

In the above theorem, we have seen that for every continuous function $f:\cI\to\R$ with $f\geq 0$ on $\cI$, there exists a sequence of positive  multivariate Bernstein zipper $\alpha$-fractal  functions which converges to $f$ in the sup-norm. The next result considers the case when the difference of two functions in $C(\cI)$ is positive.
\begin{theorem}\label{2th4}
Let $f,g\in C(\mathcal{I})$ and $f\geq g$ on $\mathcal{I}$. For all $\n \in \mathbb{N}^{m}$ let $f^{\alpha,\epsilon}_{\Delta;\n}$ be multivariate Bernstein $\alpha$-fractal functions associated with  the IFS $I_{\n}^{\epsilon}$, where
\[
\Delta := \left\{(x_{1,j_1},...,x_{m,j_m}): \mathbf{j}\in \prod\limits_{k=1}^{m}\mathbb{N}_{N_k,0}\right\}
\]
such that $a_k:=x_{k,0}<...<x_{k,N_k}=:b_k$, for $k\in\N_m$,  $I_k :=[a_k,b_k]$ and $\alpha_{\j}$ taken as in \eqref{2eq015}.

Then, the sequence $\{I_{\n}^{\epsilon}\}$ of IFSs determines a sequence  of  multivariate Bernstein zipper $\alpha$-fractal functions $\{f^{\alpha,{\epsilon}}_{\Delta;\n}\}$ such that $f^{\alpha,\epsilon}_{\Delta;\n}\geq g$ on $\mathcal{I}$ and which converges uniformly to $f$ if the continuous scaling functions $\alpha_{\mathbf{j}}(X)$ are chosen as in \eqref{2eq015} and satisfy

\begin{align}\label{2eq30}
    0\leq \alpha_{\mathbf{j}}(X)\leq \min\left\{\frac{\phi^{\epsilon}(f-g,\mathbf{j})}{\Phi_{\n}(f)-\phi(g)},1 \right\},
\end{align}
where $\phi^{\epsilon}(f-g,\mathbf{j}) :=\min\limits_{X\in \mathcal{I}}B_{n}f(X)$ and   $\phi(g):=\min\limits_{X\in \mathcal{I}}g(X).$
\end{theorem}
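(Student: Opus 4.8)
The plan is to dispose of the two assertions separately. Uniform convergence $f^{\alpha,\epsilon}_{\Delta;\mathbf{n}}\to f$ as $n_i\to\infty$ is immediate: since the scaling functions obey $\|\alpha_{\mathbf{j}}\|_{\infty}<1$ by \eqref{2eq015}, each $f^{\alpha,\epsilon}_{\Delta;\mathbf{n}}$ is a bona fide multivariate Bernstein zipper $\alpha$-fractal function, so Theorem \ref{2th3.2} (equivalently the estimate \eqref{2eq25}) applies without change. Thus the real content is the one-sided constraint $f^{\alpha,\epsilon}_{\Delta;\mathbf{n}}\ge g$ on $\mathcal{I}$, and the strategy for it is the same as in Theorem \ref{2th3}: exhibit a closed, hence complete, subset of $(\mathcal{G},\rho)$ that is invariant under the Read--Bajraktarevi\'c operator $T^{\epsilon}$, and apply the Banach fixed point theorem on it.

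Fixing $\mathbf{n}$, I would set
\[
\mathcal{G}^{*}:=\{h\in\mathcal{G}: h(X)\ge g(X)\ \text{for all}\ X\in\mathcal{I}\}.
\]
Because $f$ matches the prescribed corner values defining $\mathcal{G}$ (the $\alpha$-fractal datum uses $y_{\mathbf{j}}=f(x_{1,j_1},\dots,x_{m,j_m})$) and $f\ge g$ on $\mathcal{I}$, we have $f\in\mathcal{G}^{*}$, so $\mathcal{G}^{*}\neq\emptyset$; it is closed in $(\mathcal{G},\rho)$ and hence complete in the restricted metric. Two bookkeeping remarks are in order. First, $\phi^{\epsilon}(f-g,\mathbf{j})$ in \eqref{2eq30} must be read, in line with the notation of Theorem \ref{2th3}, as $\min_{X\in\mathcal{I}}(f-g)(u_{\mathbf{j}}^{\epsilon}(X))$. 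Second, the denominator $\Phi_{\mathbf{n}}(f)-\phi(g)$ is positive: by positivity of the multivariate Bernstein operator and $B_{\mathbf{n}}1=1$, one has $B_{\mathbf{n}}f\ge B_{\mathbf{n}}g\ge\phi(g)$, so $\Phi_{\mathbf{n}}(f)=\max_{\mathcal{I}}B_{\mathbf{n}}f\ge\phi(g)$, the degenerate case of equality (which forces $f\equiv g$ constant) being trivial.

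The heart of the argument is to check $T^{\epsilon}(\mathcal{G}^{*})\subseteq\mathcal{G}^{*}$. Take $h\in\mathcal{G}^{*}$, $\mathbf{j}\in\prod_{k=1}^{m}\mathbb{N}_{N_k}$, and $Y\in\mathcal{I}$. From the form of $T^{\epsilon}$ with base function $B_{\mathbf{n}}f$ (cf.\ \eqref{2eq17} and \eqref{2eq24}),
\[
T^{\epsilon}h(u_{\mathbf{j}}^{\epsilon}(Y))=f(u_{\mathbf{j}}^{\epsilon}(Y))+\alpha_{\mathbf{j}}(Y)\bigl(h(Y)-B_{\mathbf{n}}f(Y)\bigr).
\]
Since $\alpha_{\mathbf{j}}(Y)\ge 0$ and $h(Y)-B_{\mathbf{n}}f(Y)\ge\phi(g)-\Phi_{\mathbf{n}}(f)$, a non-positive number, we get
\[
T^{\epsilon}h(u_{\mathbf{j}}^{\epsilon}(Y))\ge f(u_{\mathbf{j}}^{\epsilon}(Y))-\alpha_{\mathbf{j}}(Y)\bigl(\Phi_{\mathbf{n}}(f)-\phi(g)\bigr).
\]
Subtracting $g(u_{\mathbf{j}}^{\epsilon}(Y))$, using $(f-g)(u_{\mathbf{j}}^{\epsilon}(Y))\ge\phi^{\epsilon}(f-g,\mathbf{j})$, and invoking the upper bound in \eqref{2eq30} yields $T^{\epsilon}h(u_{\mathbf{j}}^{\epsilon}(Y))\ge g(u_{\mathbf{j}}^{\epsilon}(Y))$. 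As the cells $u_{\mathbf{j}}^{\epsilon}(\mathcal{I})$ cover $\mathcal{I}$ and $T^{\epsilon}h$ is consistently defined on their overlaps (the well-definedness already verified in Theorem \ref{2th0}), it follows that $T^{\epsilon}h\ge g$ on $\mathcal{I}$, i.e.\ $T^{\epsilon}h\in\mathcal{G}^{*}$.

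To conclude, note that $T^{\epsilon}$ is a contraction on $(\mathcal{G},\rho)$ with ratio $\|\alpha\|_{\infty}<1$ (Theorem \ref{2th0} with $\gamma_{\mathbf{j}}=\|\alpha_{\mathbf{j}}\|_{\infty}$), so its restriction to the complete space $(\mathcal{G}^{*},\rho)$ is again a contraction with a unique fixed point there; by uniqueness of the fixed point in $\mathcal{G}$, this fixed point coincides with $f^{\alpha,\epsilon}_{\Delta;\mathbf{n}}$, which therefore satisfies $f^{\alpha,\epsilon}_{\Delta;\mathbf{n}}\ge g$ on $\mathcal{I}$. Combined with the uniform convergence recorded in the first paragraph, this proves the theorem. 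I do not anticipate a deep obstacle: the estimate is simply the $\alpha_{\mathbf{j}}\ge 0$ specialization of the two-case computation in Theorem \ref{2th3}, and the only points demanding care are the correct interpretation of $\phi^{\epsilon}(f-g,\mathbf{j})$, the positivity of the denominator, and the verification that $\mathcal{G}^{*}$ is genuinely $T^{\epsilon}$-invariant.
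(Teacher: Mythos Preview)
Your proof is correct but takes a different route from the paper. The paper argues iteratively: it observes that the fixed-point equation \eqref{2eq32} generates values of $f^{\alpha,\epsilon}_{\Delta;\n}$ on successively denser grids, verifies $f^{\alpha,\epsilon}_{\Delta;\n}\ge g$ at the nodal points (where $f^{\alpha,\epsilon}_{\Delta;\n}=f$), and then checks that the inequality propagates from one iteration level to the next under \eqref{2eq30}, concluding by density and continuity. You instead isolate the closed subset $\mathcal{G}^{*}=\{h\in\mathcal{G}:h\ge g\}$, show it is $T^{\epsilon}$-invariant by a single pointwise estimate, and invoke the Banach fixed point theorem directly on $(\mathcal{G}^{*},\rho)$. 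Your approach is cleaner and fully rigorous as written; the paper's iterative argument is conceptually the same (each iteration is one application of $T^{\epsilon}$) but leaves the passage from ``true on a dense subset'' to ``true everywhere'' somewhat implicit, and the induction is phrased loosely. Both yield the identical sufficient condition on $\alpha_{\mathbf{j}}$ via the same chain of inequalities. You are also right to flag that $\phi^{\epsilon}(f-g,\mathbf{j})$ must be read as $\min_{X\in\mathcal{I}}(f-g)(u_{\mathbf{j}}^{\epsilon}(X))$ rather than the formula printed in the statement; this matches the paper's own usage in the proof and in Corollary~\ref{2th5}.
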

\begin{proof}
By \eqref{2eq24}, we can rewrite the functional equation of $f_{\Delta;\n}^{\alpha,\epsilon}$ as follows.
\begin{align}\label{2eq32}
f^{\alpha,\epsilon}_{\Delta;\n}(X) &= f(X)+\sum_{\mathbf{j}\in \prod\limits_{k=1}^m\mathbb{N}_k} \alpha_{\mathbf{j}} ((u_{\mathbf{j}}^{\epsilon})^{-1}(X))
   (f^{\alpha}_{\Delta;\n}((u_{\mathbf{j}}^{\epsilon})^{-1}(X))\nonumber \\
   & \qquad -B_{\n}f((u_{\mathbf{j}}^{\epsilon})^{-1}(X)))\chi_{u_{\mathbf{j}}^{\epsilon}(I)}(X), \quad X\in \mathcal{I}.
\end{align}
This functional equation is a rule to get the values of $f^{\alpha,\epsilon}_{\Delta;\n}$ at $(N^{r+2}+1)^m$ distinct points in $\mathcal{I}$ in $(r+1)$-th iteration using the value of $f^{\alpha,\epsilon}_{\Delta;\n}$ at $(N^{r+1}+1)^m$ points in $\mathcal{I}$ at the $r$-th iteration. 

Let us begin the iteration process with the nodal points $X_i,i\in \mathbb{N}$. We establish that the $p$-th iterated image of $X$ satisfies $f^{\alpha,\epsilon}_{\Delta;\n}(X)\geq g(X)$.
For the $0$-th iteration, we have 
\[
f^{\alpha,\epsilon}_{\Delta;\n}(X)\geq g(X),
\]
since $f^{\alpha,\epsilon}_{\Delta;\n}$ interpolates $f$ at the nodes and $f(X)\geq g(X)$.

Now, suppose that $f^{\alpha,\epsilon}_{\Delta;\n}\geq g$. We show that 
\begin{align*}
f^{\alpha,\epsilon}_{\Delta;\n}(u_{\mathbf{j}}^{\epsilon}(X))\geq g((u_{\mathbf{j}}^{\epsilon}(X)),\quad 
     \forall X\in \mathcal{I}, \;\forall \mathbf{j} \in \prod\limits_{k=1}^{m} \mathbb{N}_k.   
\end{align*} From the fixed point equation \eqref{2eq32}, this is equivalent to proving that
\begin{align}\label{2eq33}
     f(u_{\mathbf{j}}^{\epsilon}(X))+\alpha _{\mathbf{j}} (X)f^{\alpha,\epsilon}_{\Delta;\n}(X)-
     \alpha _{\mathbf{j}} (X)B_{\n}f(X)-g(u_{\mathbf{j}}^{\epsilon}(X))\geq 0.  
\end{align}
Choosing $\alpha _{\mathbf{j}}(X)$ as non-negative and using the $p$-th iterated image yields
\[
f(u_{\mathbf{j}}^{\epsilon}(X))+\alpha _{\mathbf{j}}(X)g(X)- \alpha_{\mathbf{j}} (X)B_{\n}f(X)-g(u_{\mathbf{j}}^{\epsilon}(X))\geq 0.
\]
For the validity of the above inequality, it suffices to choose   $\alpha _{\mathbf{j}}$ so that
\begin{align}\label{2eq34}
    0\leq \alpha_{\mathbf{j}}(X)\leq \min\left\{\frac{\phi^{\epsilon}(f-g,\mathbf{j})}{\Phi_{\n}(f)-\phi(g)} \right\}.
\end{align}
If $\alpha_{\mathbf{j}}$, $\mathbf{j}\in \prod\limits_{k=1}^{m} \mathbb{N}_{k}$ satisfies \eqref{2eq30}, then $f^{\alpha,\epsilon}_{\Delta;\n}\geq g$ on a dense subset of $\mathcal{I}$.
By a density and continuity argument, $f^{\alpha,\epsilon}_{\Delta;\n}(X)\geq g(X)$ for all $X\in \mathcal{I}$.
\end{proof} 

\begin{corollary}\label{2th5}
Let $f,g\in C(\mathcal{I})$ and $f\geq g$ on $\mathcal{I}$. Consider the partition  
\[
\Delta := \left\{(x_{1,j_1},\dots ,x_{m,j_m}): \j\in \prod\limits_{k=1}^{m}\mathbb{N}_{N_k,0}\right\}
\]
with $a_k :=x_{k,0}<\dots<x_{k,N_k}=:b_k$, for each $k\in \mathbb{N}_m$,  $I_k :=[a_k,b_k]$, and a continuous scaling function $\alpha_{\mathbf{j}}:\mathcal{I}\to \mathbb{R}$.

Then, there exist sequences  $\{f^{\alpha,\epsilon}_{\Delta;\n}\}$ and $\{g^{\alpha,\epsilon}_{\Delta;\n}\}$ of multivariate Bernstein zipper $\alpha$-fractal function converging to $f$ and $g$, respectively, with $f^{\alpha,\epsilon}_{\Delta;\n} \geq g^{\alpha,\epsilon}_{\Delta;\n}$ on $\mathcal{I}$, if the scaling functions satisfy \eqref{2eq015} as well as the following estimate:
\begin{align}\label{2eq35}
    0\leq \alpha_{\mathbf{j}}(X)\leq \min\left\{\frac{\phi^{\epsilon}(f-g,\mathbf{j})}{\Phi_{\n}(f-g)},1 \right\}, \quad \mathbf{j}\in \prod\limits_{k=1}^{m}\mathbb{N}_{N_k},
\end{align}
where 
\[
\phi^{\epsilon}(f-g,\mathbf{j}):=\min_{X\in I}(f-g)(u_{\mathbf{j}}^{\epsilon}(X))
\] 
and 
\[
\Phi_{\n}(f-g):=\max_{X\in \mathcal{I}}B_{\n}(f-g)(X).
\]
\end{corollary}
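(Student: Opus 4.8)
The plan is to derive the corollary from Theorem~\ref{2th4} together with the linearity of the multivariate Bernstein zipper $\alpha$-fractal operator established in Theorem~\ref{2th3.1}. Set $h:=f-g$, so that $h\in C(\mathcal{I})$ and $h\geq 0$ on $\mathcal{I}$. Fix scaling functions $\alpha_{\mathbf{j}}$ satisfying \eqref{2eq015}, so that for every $\n$ the IFS \eqref{2eq23} determines genuine continuous Bernstein zipper $\alpha$-fractal functions $f_{\Delta;\n}^{\alpha,\epsilon}$, $g_{\Delta;\n}^{\alpha,\epsilon}$ and $h_{\Delta;\n}^{\alpha,\epsilon}$, all built with the \emph{same} $\alpha$. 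Since $\mathcal{F}_{\Delta,B_{\n}}^{\alpha,\epsilon}$ is linear (Theorem~\ref{2th3.1}) --- which ultimately rests on $B_{\n}(f-g)=B_{\n}f-B_{\n}g$ --- we obtain the key identity
\[
h_{\Delta;\n}^{\alpha,\epsilon}=f_{\Delta;\n}^{\alpha,\epsilon}-g_{\Delta;\n}^{\alpha,\epsilon},\qquad \n\in\mathbb{N}^{m}.
\]
Consequently, the sought inequality $f_{\Delta;\n}^{\alpha,\epsilon}\geq g_{\Delta;\n}^{\alpha,\epsilon}$ on $\mathcal{I}$ is equivalent to $h_{\Delta;\n}^{\alpha,\epsilon}\geq 0$ on $\mathcal{I}$.

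Next I would apply Theorem~\ref{2th4} to the pair $(h,0)$ in place of $(f,g)$; this is legitimate because $h\geq 0$. That theorem yields $h_{\Delta;\n}^{\alpha,\epsilon}\geq 0$ on $\mathcal{I}$ together with $h_{\Delta;\n}^{\alpha,\epsilon}\to h$ uniformly, as soon as the scaling functions obey \eqref{2eq015} and the estimate \eqref{2eq30}, which for this particular pair specializes to
\[
0\leq\alpha_{\mathbf{j}}(X)\leq\min\left\{\frac{\phi^{\epsilon}(h,\mathbf{j})}{\Phi_{\n}(h)-\phi(0)},\,1\right\}=\min\left\{\frac{\phi^{\epsilon}(f-g,\mathbf{j})}{\Phi_{\n}(f-g)},\,1\right\},
\]
that is, to \eqref{2eq35}, since $\phi^{\epsilon}(f-g,\mathbf{j})=\min_{X\in\mathcal{I}}(f-g)(u_{\mathbf{j}}^{\epsilon}(X))$, $\Phi_{\n}(f-g)=\max_{X\in\mathcal{I}}B_{\n}(f-g)(X)$ and $\phi(0)=0$. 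Combining this with the identity of the previous paragraph gives $f_{\Delta;\n}^{\alpha,\epsilon}\geq g_{\Delta;\n}^{\alpha,\epsilon}$ on $\mathcal{I}$. The uniform convergences $f_{\Delta;\n}^{\alpha,\epsilon}\to f$ and $g_{\Delta;\n}^{\alpha,\epsilon}\to g$ as $n_i\to\infty$ for all $i$ then follow immediately from Theorem~\ref{2th3.2} applied to $f$ and to $g$ individually, as it requires no change of $\alpha$.

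For a self-contained argument one can instead mimic the proof of Theorem~\ref{2th4} directly: subtracting the self-referential equation \eqref{2eq24} for $g_{\Delta;\n}^{\alpha,\epsilon}$ from that for $f_{\Delta;\n}^{\alpha,\epsilon}$ and inducting on the iteration level, one shows that on the dense set of iterated nodes
\[
\big(f_{\Delta;\n}^{\alpha,\epsilon}-g_{\Delta;\n}^{\alpha,\epsilon}\big)(u_{\mathbf{j}}^{\epsilon}(X))=(f-g)(u_{\mathbf{j}}^{\epsilon}(X))+\alpha_{\mathbf{j}}(X)\Big[\big(f_{\Delta;\n}^{\alpha,\epsilon}-g_{\Delta;\n}^{\alpha,\epsilon}\big)(X)-B_{\n}(f-g)(X)\Big]\geq 0,
\]
the inductive hypothesis $\big(f_{\Delta;\n}^{\alpha,\epsilon}-g_{\Delta;\n}^{\alpha,\epsilon}\big)(X)\geq 0$ and $\alpha_{\mathbf{j}}(X)\geq 0$ reducing the claim to $\phi^{\epsilon}(f-g,\mathbf{j})-\alpha_{\mathbf{j}}(X)\Phi_{\n}(f-g)\geq 0$, i.e.\ to \eqref{2eq35}, after which density and continuity extend it to all of $\mathcal{I}$. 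Either way, the only point that needs a word of care --- and the mild obstacle I anticipate --- is the well-definedness of the bound in \eqref{2eq35}, namely $\Phi_{\n}(f-g)>0$: when $f\equiv g$ the statement is trivial, and otherwise positivity of $B_{\n}$ and the uniform convergence $B_{\n}(f-g)\to f-g$ force $\Phi_{\n}(f-g)>0$ for all large $\n$, while $\phi^{\epsilon}(f-g,\mathbf{j})\geq\min_{\mathcal{I}}(f-g)\geq 0$; hence the admissible interval for each $\alpha_{\mathbf{j}}$ is nonempty (it always contains $\alpha_{\mathbf{j}}\equiv 0$).
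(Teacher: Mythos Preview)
Your proposal is correct and follows essentially the same route as the paper, whose entire proof reads ``We obtain the result by taking $f$ as $f-g$ and $g=0$ in Theorem~\ref{2th4}.'' You simply make explicit the two steps the paper leaves implicit---the linearity $\mathcal{F}_{\Delta,B_{\n}}^{\alpha,\epsilon}(f-g)=f_{\Delta;\n}^{\alpha,\epsilon}-g_{\Delta;\n}^{\alpha,\epsilon}$ needed to pass from $h_{\Delta;\n}^{\alpha,\epsilon}\geq 0$ to $f_{\Delta;\n}^{\alpha,\epsilon}\geq g_{\Delta;\n}^{\alpha,\epsilon}$, and the separate appeal to Theorem~\ref{2th3.2} for the individual convergences---together with a welcome remark on the nondegeneracy of the bound \eqref{2eq35}.
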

\begin{proof}
We obtain the result by taking $f$ as $f-g$ and $g=0$ in Theorem \ref{2th4}.
\end{proof}

In the following theorem, we  construct  a sequence of increasing  multivariate Bernstein zipper FIFs  and a one-sided approximation by a convex  continuous function defined in an $m$-dimensional hyperrectangle.  
For this theorem, we adopt the following notation: For $\n=(n_1,\cdots, n_m)\in \prod\limits_{k=1}^m I_k$, let $\n+1 :=(n_1+1,\cdots, n_m+1)$.
\begin{theorem}
Let $f\in C(\mathcal{I})$ be convex and suppose $\alpha_{\mathbf{j}}$ are non-negative scaling functions as in \eqref{2eq015}. Then, for $n_i \in \mathbb{N}_k$, $i\in \mathbb{N}_m$,
\begin{align}\label{2eq45}
 f^{\alpha,\epsilon}_{\Delta;\n}(X) \leq f^{\alpha,\epsilon}_{\Delta;\n+1}(X), \text{ for all } X\in   \mathcal{I}. 
\end{align}
Moreover, for $n_i \in \mathbb{N}_k$, $i\in \mathbb{N}_m$,
\begin{align}\label{2eq46}
 f^{\alpha,\epsilon}_{\Delta;\n}(X) \leq f(X), \text{ for all } X\in   \mathcal{I}. 
\end{align}
\end{theorem}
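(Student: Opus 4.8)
\noindent The plan is to combine the self-referential identity \eqref{2eq24}, the non-negativity of the scaling functions $\alpha_{\mathbf j}$, and two classical comparison properties of the Bernstein operator on a convex function, namely $B_{\n}f\geq f$ and $B_{\n+1}f\leq B_{\n}f$ on $\cI$. Since $B_{\n}$ is the composition $B_{n_1}^{(1)}\circ\cdots\circ B_{n_m}^{(m)}$ of positive linear univariate Bernstein operators acting in the separate coordinates, and a non-negative combination of functions convex in a fixed coordinate is again convex in that coordinate, both comparisons follow, one coordinate at a time, from the univariate facts $B_n g\geq g$ and $B_{n+1}g\leq B_n g$ for convex $g$ \cite{Davis}; a telescoping sum over the coordinates handles the simultaneous increment $\n\mapsto\n+1$, the positive outer factors preserving the sign of each telescoped term. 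Establishing this multivariate monotonicity carefully is the step I expect to be the main obstacle, although it is entirely a statement about Bernstein polynomials and independent of the fractal construction.

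\medskip
\noindent To prove \eqref{2eq45}, let $T^{\epsilon}_{\n}$ denote the Read--Bajraktarevi\'c operator \eqref{2eq17} with base function $b=B_{\n}f$, whose unique fixed point is $f^{\alpha,\epsilon}_{\Delta;\n}$; thus for every $g\in\mathcal G$, every multi-index $\mathbf j\in\prod_{k=1}^{m}\mathbb{N}_{N_k}$, and every $X\in\cI$,
\[
(T^{\epsilon}_{\n}g)\bigl(u_{\mathbf j}^{\epsilon}(X)\bigr)=f\bigl(u_{\mathbf j}^{\epsilon}(X)\bigr)+\alpha_{\mathbf j}(X)\bigl(g(X)-B_{\n}f(X)\bigr).
\]
Set $h:=f^{\alpha,\epsilon}_{\Delta;\n}$ and $K:=\{g\in\mathcal G:\ g\geq h\ \text{on}\ \cI\}$; since $h\in K$ and $K$ is closed in $(\mathcal G,\rho)$, it is a nonempty complete metric space. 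For $g\in K$, subtracting the identity for $h$ (base $B_{\n}f$) from the one for $T^{\epsilon}_{\n+1}g$ (base $B_{\n+1}f$, same $\alpha_{\mathbf j}$) gives, for all $\mathbf j$ and $X\in\cI$,
\[
(T^{\epsilon}_{\n+1}g)\bigl(u_{\mathbf j}^{\epsilon}(X)\bigr)-h\bigl(u_{\mathbf j}^{\epsilon}(X)\bigr)=\alpha_{\mathbf j}(X)\Bigl[\bigl(g(X)-h(X)\bigr)+\bigl(B_{\n}f(X)-B_{\n+1}f(X)\bigr)\Bigr]\geq 0,
\]
using $g\geq h$, the comparison $B_{\n}f\geq B_{\n+1}f$, and $\alpha_{\mathbf j}\geq 0$. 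Because each $u_{k,j_k}^{\epsilon^k}$ maps onto $I_{k,j_k}$ and $\bigcup_{j_k}I_{k,j_k}=I_k$, one has $\bigcup_{\mathbf j}u_{\mathbf j}^{\epsilon}(\cI)=\cI$, so the display forces $T^{\epsilon}_{\n+1}g\geq h$ on $\cI$; hence $T^{\epsilon}_{\n+1}(K)\subseteq K$. Since $T^{\epsilon}_{\n+1}$ is a contraction on $(\mathcal G,\rho)$ and $K$ is complete, its unique fixed point $f^{\alpha,\epsilon}_{\Delta;\n+1}$ belongs to $K$, which is precisely \eqref{2eq45}.

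\medskip
\noindent For \eqref{2eq46} I would rerun the scheme with $K':=\{g\in\mathcal G:\ g\leq f\ \text{on}\ \cI\}$, which contains $f$ and is complete. For $g\in K'$ one has $g(X)\leq f(X)\leq B_{\n}f(X)$ by convexity, so for all $\mathbf j$ and $X\in\cI$,
\[
(T^{\epsilon}_{\n}g)\bigl(u_{\mathbf j}^{\epsilon}(X)\bigr)-f\bigl(u_{\mathbf j}^{\epsilon}(X)\bigr)=\alpha_{\mathbf j}(X)\bigl(g(X)-B_{\n}f(X)\bigr)\leq 0,
\]
whence $T^{\epsilon}_{\n}g\leq f$ on $\cI$ and $T^{\epsilon}_{\n}(K')\subseteq K'$; the fixed point $f^{\alpha,\epsilon}_{\Delta;\n}$ therefore lies in $K'$. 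Alternatively, \eqref{2eq46} is immediate from \eqref{2eq45} and Theorem \ref{2th3.2}: fixing $\n$, the sequence obtained by repeatedly adding $1$ to all components of $\n$ is non-decreasing by \eqref{2eq45} and converges uniformly to $f$, so $f^{\alpha,\epsilon}_{\Delta;\n}\leq f$. One could also prove both inequalities by iterating the functional equation \eqref{2eq32} on the dense set of vertex orbits, in the style of the proof of Theorem \ref{2th4}, but the invariant-set formulation avoids the closing density-and-continuity argument.
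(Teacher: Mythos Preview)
Your proof is correct and rests on the same three ingredients as the paper's: the self-referential identity \eqref{2eq24}, the non-negativity of the $\alpha_{\mathbf j}$, and the Bernstein comparison $B_{\n}f\geq B_{\n+1}f$ for convex $f$ (which the paper cites from \cite{Foupouagnigni_Wouodjie}). The only difference is one of packaging: where the paper subtracts the two functional equations and then appeals informally to ``the construction of fractal function is an iterative process'' to propagate the sign, you recast this step as the statement that the contraction $T^{\epsilon}_{\n+1}$ preserves the closed set $K=\{g\in\mathcal G:g\geq f^{\alpha,\epsilon}_{\Delta;\n}\}$, which makes the argument self-contained and avoids the density-and-continuity closure. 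For \eqref{2eq46} your ``alternative'' via \eqref{2eq45} and Theorem~\ref{2th3.2} is exactly the paper's argument; your direct invariant-set proof using $f\leq B_{\n}f$ is an extra route the paper does not give.
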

\begin{proof}
By \eqref{2eq24}, we have self-referential equations for $f^{\alpha,\epsilon}_{\Delta;\n}$ and  $f^{\alpha,\epsilon}_{\Delta;\n+1}$,  $\mathbf{j}\in \prod\limits_{k=1}^{m}\mathbb{N}_{N_k}$, $ X\in \mathcal{I}$, of the form
\begin{align}\label{2eq44}
\begin{split}
   f_{\Delta;\n}^{\alpha,\epsilon}(u_{\j}^{\epsilon}(X)) &=f(u_{\mathbf{j}}^{\epsilon}(X))+\alpha_{\j}(X)
   \cdot(f_{\Delta;\n}^{\alpha,\epsilon}(X)-B_{\n}f(X)),\\
   f_{\Delta;\n+1}^{\alpha,\epsilon}(u_{\j}^{\epsilon}(X)) &=f(u_{\j}^{\epsilon}(X))+\alpha_{\j}(X)(f_{\Delta;\n+1}^{\alpha,\epsilon}(X)-B_{\n+1}f(X))
\end{split}
\end{align}
From \eqref{2eq44}, we obtain
\begin{align*}
    \begin{split}
       f_{\Delta;\n+1}^{\alpha,\epsilon}(u_{\j}^{\epsilon}(X)) - f_{\Delta;\n}^{\alpha,\epsilon}(u_{\j}^{\epsilon}(X)) &=
       \alpha_{\j}(X)(f_{\Delta;\n+1}^{\alpha,\epsilon}(X)-f_{\Delta;\n}^{\alpha,\epsilon}(X))\\  
       & \quad + \alpha_{\j}(X)(B_{\n}f-B_{\n+1}f)(X).
    \end{split}
\end{align*}
Reference \cite[Theorem 5]{Foupouagnigni_Wouodjie} implies that $(B_{\n}f-B_{\n+1}f)(X)\geq 0$ and the above equation thus takes the form
\begin{align*}
       f_{\Delta;\n+1}^{\alpha,\epsilon}(u_{\j}^{\epsilon}(X)) - f_{\Delta;\n}^{\alpha,\epsilon}(u_{\j}^{\epsilon}(X))\leq 
       \alpha_{\j}(X)(f_{\Delta;\n+1}^{\alpha,\epsilon}(X)-f_{\Delta;\n}^{\alpha,\epsilon}(X)).
\end{align*}
As the construction of fractal function is an iterative process,  we infer from the above equation that  $f_{\Delta;\n+1}^{\alpha,\epsilon}(X)\geq f_{\Delta;\n}^{\alpha,\epsilon}(X)$, for all $ X\in \mathcal{I}$.

As $f_{\Delta;\n}^{\alpha,\epsilon}$ converges uniformly to $f$, \eqref{2eq45} implies \eqref{2eq46}.
\end{proof}

\section{Coordinate-Wise Monotonic Multivariate Bernstein zipper $\alpha$-fractal functions}\label{2sec6}
Multivariate monotonic interpolation functions play an important role in, for instance, 
empirical option pricing models \cite{Hutchison_Lo_Poggio} in finance, 
design of aggregation operators in multi-criteria decision-making and fuzzy logic\cite{Calvo_Kolesarova_Komornikova_Mesiar1}, dose-response curves and surfaces in biochemistry and pharmacology. Some work on monotonic surface
approximation can be found in \cite{Chand_vv, Calvo_Kolesarova_Komornikova_Mesiar, Beatson_Ziegler}. In this section, we develop coordinate-wise monotonic ZFIFs without using differentiability of the multivariate ZFIFs on rectangular grids.
\begin{theorem}\label{2th7}
Let $f\in C(\mathcal{I})$ be non-zero and  increasing with respect to the variable $x_l$. Let 
\begin{gather*}
g_{\j}^{\epsilon}(X) :=f(u_{\j}^{\epsilon}(X)), \quad \gamma_{\j}^{\epsilon}:=\min_{X\in \mathcal{I}}\frac{\partial g_{\j}^{\epsilon}}{\partial x_l}(X),\\
\Gamma_{\j}^{\epsilon}:=\max_{X\in \mathcal{I}}\frac{\partial g_{\j}^{\epsilon}}{\partial x_l}(X), \quad \Gamma_{\n}:=\max_{X\in \mathcal{I}}\frac{\partial B_{\n}f}{\partial x_l}(X). 
\end{gather*}
Then, $f^{\alpha,\epsilon}_{\Delta;\n}(X)$ is increasing with respect to the variable $x_l$ if the partial derivative $f_{x_l}$ exists and the scaling functions $\alpha_{\j}$  given in \eqref{2eq015} satisfy the following conditions:\begin{equation}\label{2eq43}
\begin{split}
   & (i)\quad 0 \leq \alpha_{\j}(X)\leq \frac{\gamma_{\j}^{\epsilon}}{\Gamma_{\n}},\; \text{if  $\epsilon_{j_l}^l=0$ and $j_l$ odd, or, $\epsilon_{j_l}^l=1$ and $j_l$ even};\\ 
&(ii) \quad \frac{\Gamma_{\j}^{\epsilon}}{\Gamma_{\n}}\leq \alpha_{\j}(X)\leq 0,\; \text{if $\epsilon_{j_l}^l=0$ and $j_l$ even, or, $\epsilon_{j_l}^l=1$ and $j_l$  odd}.
\end{split}
\end{equation}
for $X \in \mathcal{I}$, $\j\in \prod\limits_{k=1}^{m}N_k$:

\end{theorem}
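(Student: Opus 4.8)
The plan is to reproduce the iterative, ``glue-then-pass-to-the-limit'' scheme from the proof of Theorem~\ref{2th4}, but now tracking the behaviour of the iterates in the single variable $x_l$. Realize $f^{\alpha,\epsilon}_{\Delta;\n}$ as the limit of $h_0:=f$, $h_{p+1}:=T^{\epsilon}h_p$; this is legitimate because $f\in\mathcal{G}$ and $T^{\epsilon}$ is a contraction, so $h_p\to f^{\alpha,\epsilon}_{\Delta;\n}$ uniformly. First I would pin down the orientation of the scaling maps: writing $u_{l,j_l}^{\epsilon^l}(x)=a_{l,j_l}x+b_{l,j_l}$ and reading \eqref{2eq2}, the slope $a_{l,j_l}$ is positive exactly when $\epsilon_{j_l}^l=0$ with $j_l$ odd or $\epsilon_{j_l}^l=1$ with $j_l$ even (case (i) of \eqref{2eq43}) and negative in the complementary case (ii). Since $u_{\j}^{\epsilon}$ acts one coordinate at a time and $f_{x_l}\ge 0$, one has $\partial g_{\j}^{\epsilon}/\partial x_l=a_{l,j_l}\,(f_{x_l}\circ u_{\j}^{\epsilon})$, hence $\gamma_{\j}^{\epsilon}\ge 0$ in case (i) and $\Gamma_{\j}^{\epsilon}\le 0$ in case (ii); moreover $\Gamma_{\n}\ge 0$ because the tensor Bernstein operator preserves coordinate-wise monotonicity. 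Thus the one-sided ranges prescribed for $\alpha_{\j}$ in \eqref{2eq43} are consistent, and I would assume $\Gamma_{\n}>0$ (the case $B_{\n}f$ constant in $x_l$ being trivial).

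For the inductive step (hypothesis: $h_p$ non-decreasing in $x_l$ on $\mathcal{I}$), fix $\j$ and points $X,X'\in\mathcal{I}$ that coincide in all coordinates except the $l$-th, with $x_l<x_l'$; then $u_{\j}^{\epsilon}(X)$ and $u_{\j}^{\epsilon}(X')$ also differ only in their $l$-th slot. From \eqref{2eq24},
\begin{align*}
h_{p+1}(u_{\j}^{\epsilon}(X'))-h_{p+1}(u_{\j}^{\epsilon}(X))
&=\bigl(g_{\j}^{\epsilon}(X')-g_{\j}^{\epsilon}(X)\bigr)\\
&\quad+\alpha_{\j}\bigl[(h_p(X')-h_p(X))-(B_{\n}f(X')-B_{\n}f(X))\bigr].
\end{align*}
The mean value theorem in the $x_l$ direction gives $g_{\j}^{\epsilon}(X')-g_{\j}^{\epsilon}(X)\ge\gamma_{\j}^{\epsilon}(x_l'-x_l)$ in case (i) and $\le\Gamma_{\j}^{\epsilon}(x_l'-x_l)$ in case (ii), while $B_{\n}f(X')-B_{\n}f(X)\le\Gamma_{\n}(x_l'-x_l)$; the hypothesis gives $h_p(X')-h_p(X)\ge 0$. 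In case (i), $0\le\alpha_{\j}\le\gamma_{\j}^{\epsilon}/\Gamma_{\n}$ forces $h_{p+1}(u_{\j}^{\epsilon}(X'))-h_{p+1}(u_{\j}^{\epsilon}(X))\ge(\gamma_{\j}^{\epsilon}-\alpha_{\j}\Gamma_{\n})(x_l'-x_l)\ge 0$, and since here $u_{l,j_l}^{\epsilon^l}$ preserves orientation this says $h_{p+1}$ is non-decreasing in $x_l$ on $\prod_{k}I_{k,j_k}$. In case (ii), $\Gamma_{\j}^{\epsilon}/\Gamma_{\n}\le\alpha_{\j}\le 0$ gives $h_{p+1}(u_{\j}^{\epsilon}(X'))-h_{p+1}(u_{\j}^{\epsilon}(X))\le(\Gamma_{\j}^{\epsilon}-\alpha_{\j}\Gamma_{\n})(x_l'-x_l)\le 0$, and since $u_{l,j_l}^{\epsilon^l}$ now reverses orientation this again makes $h_{p+1}$ non-decreasing in $x_l$ on $\prod_{k}I_{k,j_k}$. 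Finally $h_{p+1}\in\mathcal{G}\subset C(\mathcal{I})$ is non-decreasing in $x_l$ on each block $\prod_{k}I_{k,j_k}$ and continuous across the shared faces $\{x_l=x_{l,j_l}\}$, hence non-decreasing in $x_l$ on all of $\mathcal{I}$; the induction closes, and letting $p\to\infty$ yields the assertion.

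The step where I expect real difficulty is the one that used a \emph{constant} $\alpha_{\j}$ in the increment identity: if the scaling functions genuinely vary with $X$ (as allowed by \eqref{2eq015}), the difference $h_{p+1}(u_{\j}^{\epsilon}(X'))-h_{p+1}(u_{\j}^{\epsilon}(X))$ picks up an extra term $\bigl[\alpha_{\j}(X')-\alpha_{\j}(X)\bigr]\,(h_p(X')-B_{\n}f(X'))$ which the pointwise bounds \eqref{2eq43} do not control; one then has to either restrict to scalings constant in $x_l$ or add a Lipschitz/monotonicity hypothesis on $\alpha_{\j}$ to absorb it, and I would present the argument in the constant-scaling setting while flagging this. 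The remaining pieces — the orientation computation, the mean value estimates, the gluing across blocks, and the passage to the limit — are routine.
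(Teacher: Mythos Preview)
Your proposal is correct and follows essentially the same route as the paper: the paper also works from the self-referential/iterative structure, splits into the two orientation cases for $u_{l,j_l}^{\epsilon^l}$, and bounds the $x_l$-increments via the mean value theorem, concluding by the iterative nature of the construction. Your flag about the extra cross-term when $\alpha_{\j}$ genuinely varies in $x_l$ is well taken --- the paper's own displayed difference puts a single $\alpha_{\j}(X)$ in front of $(f^{\alpha,\epsilon}_{\Delta;\n}(X'')-f^{\alpha,\epsilon}_{\Delta;\n}(X'))-(B_{\n}f(X'')-B_{\n}f(X'))$ and so tacitly makes the same constant-in-$x_l$ assumption you isolate.
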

%
%
\begin{proof}
Let $X' :=(x_1,\dots,x_{l}',\dots,x_m), X'':=(x_1,\dots,x_{l},\dots,x_m)\in \prod\limits_{k=1}^{m}I_k$ where  $x_{l}'< x_{l}''$ and $f(X'')\geq f(X')$.
Then.
\begin{align*}
f^{\alpha,\epsilon}_{\Delta;\n}(u_{\j}^{\epsilon}(X'')) - f^{\alpha,\epsilon}_{\Delta;\n}(u_{\j}^{\epsilon}(X')) &= f(u_{\j}^{\epsilon}(X''))-f(u_{\j}^{\epsilon}(X'))+\alpha _{\j}(X)((f^{\alpha,\epsilon}_{\Delta;\n}(X'')\\
& \quad - (f^{\alpha,\epsilon}_{\Delta;n}(X'))(B_{\n}f(X''))-B_{\n}f(X'))).
 \end{align*}
 As $B_{\n}f$ is increasing with respect to the variable $x_l$ \cite{Foupouagnigni_Wouodjie}, $\Gamma_{\n}$ $>0$. Now there are two cases:
 \vskip 4pt\noindent
 \textbf{Case (i):} $\epsilon_{j_l}^l=0$ and $j_l$ odd, or, $\epsilon_{j_l}^l=1$ and $ j_l$ even, i.e.,  $u_l^{\epsilon^l}$ is increasing.
 
  In this case,    $f(u_{\j}^{\epsilon}(X))$ is increasing with respect to the variable $x_l$, which implies that $\gamma_{\textbf{j}}^{\epsilon}$ is non-negative.   Using the  mean value theorem for several variables applied to  $f(u_{j_1\cdots j_l\cdots j_m}^{\epsilon}(X''))-f(u_{j_1\cdots j_l\cdots j_m}^{\epsilon}(X'))$ and $(B_{\n}f(X'')-B_{\n}f(X'))$, yields
 \begin{align*}
& f^{\alpha,\epsilon}_{\Delta;\n}(u_{j_1\cdots j_l\cdots j_m}^{\epsilon}(X'')) - f^{\alpha,\epsilon}_{\Delta;\n}(u_{j_1,\cdots j_l\cdots ,j_m}^{\epsilon}(X'))\\ 
& \quad \geq \gamma_{j_1\cdots j_l\cdots j_m}^{\epsilon}(x_{l}''-x_{l}') - \alpha _{j_1\cdots j_l\cdots j_m}(X)\Gamma_{n}(x_{l}''-x_{l}')+\alpha _{j_1\cdots j_l\cdots j_m}(X)\\
&\qquad \cdot(f^{\alpha,\epsilon}_{\Delta;\n}(X'')-f^{\alpha,\epsilon}_{\Delta;\n}(X'))\\  &\quad = (\gamma_{j_1\cdots j_l\cdots j_m}^{\epsilon}-\alpha _{j_1\cdots j_l\cdots j_m}(X) \Gamma_{\n})(x_{l}'' - x_{l}')\\
& \qquad + \alpha_{j_1\cdots j_m}(X)(f^{\alpha,\epsilon}_{\Delta;\n}(X'')-f^{\alpha,\epsilon}_{\Delta;\n}(X')).
\end{align*}
If $\alpha_{j_1\cdots j_m}(X)\geq0$, then we need $\gamma_{j_1\cdots j_l\cdots j_m}^{\epsilon}-\alpha _{j_1\cdots j_l\cdots j_m}(X)\Gamma_{\n})(x_{l}''-x_{l}') \geq 0$ which yields the first condition in \eqref{2eq43}.
 \vskip 4pt\noindent
\textbf{Case (ii):} $\epsilon_{j_l}^l=0$ and $j_l$ even, or, $\epsilon_{j_l}^l=1$ and $ j_l$ odd, i.e., $u_{l,j_l}^{\epsilon^l}$ is decreasing.

In this case,  $f(u_{j_1\cdots j_l\cdots j_m}^{\epsilon}(X))$ is decreasing with respect to the variable $x_l$, which ensures that $\gamma_{j_1\cdots j_l\cdots j_m}^{\epsilon}$ is non-positive.   

If $\alpha_{j_1\cdots  j_m}(X)\leq0$, then an application of the mean value theorem for several variables applied to $f(u_{j_1\cdots j_l\cdots j_m}^{\epsilon}(X''))-f(u_{j_1\cdots j_l\cdots j_m}^{\epsilon}(X'))$ and $(B_{\n}f(X'')-B_{\n}f(X'))$, yields
 \begin{align*}
  f^{\alpha,\epsilon}_{\Delta;\n}(u_{j_1,\cdots ,j_l,\cdots ,j_m}^{\epsilon}(X''))&- f^{\alpha,\epsilon}_{\Delta;\n}(u_{j_1,\cdots, j_l,\cdots, j_m}^{\epsilon}(X'))\\ 
  &\leq \Gamma_{j_1,\cdots ,j_l,\cdots ,j_m}^{\epsilon}(x_{l}'' - x_{l}') - 
  \alpha _{j_1,\cdots ,j_l,\cdots , j_m}(X)\Gamma_{\n}(x_{l}''-x_{l}')\\
  & \quad + \alpha _{j_1,\cdots ,j_,\cdots , j_m}(X)(f^{\alpha,\epsilon}_{\Delta;\n}(X'')-f^{\alpha,\epsilon}_{\Delta;n}(X'))\\  
  & = (\Gamma_{j_1;\cdots ,j_l;\cdots , j_m}^{\epsilon}-\alpha _{j_1,\cdots ,j_l,\cdots ,j_m}(X)\Gamma_{\n})(x_{l}''-x_{l}')\\ 
  & \quad + \alpha _{j_1,\cdots ,j_m}(X)(f^{\alpha,\epsilon}_{\Delta;\n}(X'')-f^{\alpha,\epsilon}_{\Delta;\n}(X')).
\end{align*}
Thus, $\Gamma_{j_1,\cdots ,j_l,\cdots ,j_m}^{\epsilon}-\alpha _{j_1,\cdots ,j_l,\cdots ,j_m}(X)  \Gamma_{\n})(x_{l}''-x_{l}')\leq0$ if the second inequality in \eqref{2eq43} is true.
\end{proof}
Since fractal interpolation is an iterative process, it ensures that $f^{\alpha,\epsilon}_{\Delta;\n}$ is increasing  with respect to the variable $x_l$.

\begin{remark}
 Using similar arguments, we can construct coordinate-wise monotonically decreasing multivariate Bernstein zipper  $\alpha$-fractal functions $f^{\alpha,\epsilon}_{\Delta;\n}(X)$ for coordinate-wise monotonically decreasing functions $f\in C(\mathcal{I})$.
\end{remark} 

\section{Box Dimension of Multivariate ZFIF}
In this section, we derive bounds for the box dimension of the graph of a multivariate ZFIF. Furthermore, we show that a multivariate Bernstein polynomial $B_{\n}f$ is H\"olderian with exponent $\beta$ provided that $f$ is  H\"olderian with exponent $\beta$. This will be used to obtain estimates for the box dimension of graphs of multivariate zipper Bernstein fractal functions.

\begin{definition}
Let  $A\in \mathbb{R}_0^+$ and $0<\beta \leq 1$. Then,  $\Lip_A \beta$ is defined as the set of all functions $f: \mathcal{K}\subset \mathbb{R}^m\to \mathbb{R} $ satisfying 
\[
|f(X_2)-f(X_1)|\leq A\|X_2-X_1\|^{\beta}, \quad \forall\, X_1,X_2 \in \mathcal{K}.
\]
Such functions are also called uniformly H\"olderian with exponent $\beta$.
\end{definition}
In the next theorem, we provide  estimates for the fractal dimension of the graph of  a multizipper FIF. For this purpose, we use uniform partitions of $I_k=[0,1]$, $k\in \mathbb{N}_m$.
Based on the structure of the IFSs \eqref{2eq000} and  \eqref{2eq23},  we choose $u_{k,j_k}^{\epsilon^k}:I_k \to I_{k,j_k}$ as
\begin{align}
   u_{k,j_k}^{\epsilon^k}(x_k) := 
   \begin{cases}
    \frac{1-2\epsilon^k_{j_k}}{N_k}x_k+\frac{j_k-1+\epsilon^k_{j_k}}{N_k}, & \text{if}\; j_k \;\text{is odd};\\
    \frac{-1+2\epsilon^k_{j_k}}{N_k}x_k+\frac{j_k-\epsilon^k_{j_k}}{N_k}, & \text{if}\; j_k \;\text{is even}.
    \end{cases}
     \quad j_k \in \mathbb{N}_{N_k},\; k\in \mathbb{N}_m.
\end{align}

\begin{definition}{\cite{Falconer}}
Let $A $ be a non-empty bounded subset of $\mathbb{R}^n$. Suppose $\Lambda(\delta)$ denote the smallest number of $m$-dimensional cube of side $\delta$  that can cover $A$. The lower and upper box-counting dimensions of $A$ are defined as
\begin{align*}
    \begin{split}
     {\lowdim}(A)=\lowlim_{\delta \to 0}\frac{\log(N_{\delta}(A))}{-\log(\delta)}\\
     {\updim}(A)=\uplim_{\delta \to 0}\frac{\log(N_{\delta}(A))}{-\log(\delta)},
    \end{split}
\end{align*}
respectively. If these are equal and finite, we refer to the common value as the box-counting dimension
or box dimension of $A$:
\begin{align*}
    {\dim}_B(A)=\lim_{\delta \to 0}\frac{\log(N_{\delta}(A))}{-\log(\delta)}.
\end{align*}
\end{definition}
Suppose that the IFS  \eqref{2eq000} generates a multizipper FIF $f_{\n}^{(\alpha,\epsilon)}$. Then, we have the following result.
\begin{theorem}\label{Thm-BD}
Let $f,b\in C(\mathcal{I})$ with H\"older exponents $\xi_1, \xi_2 \in (0,1]$. Let $G$ be the graph of the fractal function $f_{\n}^{(\alpha,\epsilon)}$ associated with the IFS \eqref{2eq000}. Suppose that the interpolation points  are not contained in an $(m-1)$-dimensional hyperplane of $\R^m$. Let $\xi :=\min\{\xi_1,\xi_2\}$ and let 
\[
\gamma := \sum\limits_{j_1=1}^{N_1} \sum\limits_{j_1=2}^{N_2} \cdots \sum\limits_{j_k=1}^{N_k} \|\alpha_{\textbf{j}}\|_{\infty},
\]
Then, we have the following bounds for the box dimension of  $G$ based on the magnitude of $\gamma$:
\begin{itemize}
    \item[(i)]  If $\gamma \leq 1$, then  $m\leq dim_B(G) \leq m+1-\xi$;
    \item[(ii)] If  $\gamma > 1$ and  $(N_1N_2\cdots N_m)^{(\xi-m)} \gamma \leq 1$, then 
    \[
    m\leq dim_B(G) \leq m+1-\xi + \frac{\log(\gamma)}{\log(N_1N_2\cdots N_m)};
    \]
    \item[(iii)] If  $\gamma > 1$ and  $(N_1N_2\cdots N_m)^{(\xi-m)} \gamma > 1$, then 
    \[
    m\leq dim_B(G) \leq  1+ \frac{\log(\gamma)}{\log(N_1N_2\cdots N_m)}.
    \]
\end{itemize}
\end{theorem}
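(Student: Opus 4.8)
The plan is to run the standard oscillation‑plus‑covering argument for fractal interpolation functions, carried over to the $m$‑dimensional hierarchical tiling generated by the IFS \eqref{2eq000}. Write $g:=f_{\n}^{(\alpha,\epsilon)}$, let $P:=N_1N_2\cdots N_m$ be the number of branch maps, and for a bounded $h$ on a bounded set $S$ put $\mathrm{osc}(h;S):=\sup_S h-\inf_S h$. The lower bound $m\le\dim_B(G)$ is immediate: the projection $\R^{m+1}\to\R^{m}$ onto the first $m$ coordinates is $1$‑Lipschitz and maps $G$ onto $\mathcal I$, so $\dim_B(G)\ge\dim_B(\mathcal I)=m$; hypothesis (1) is the usual non‑degeneracy condition for a matching lower estimate and is not needed for the upper bounds below. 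For the upper bound I would iterate the IFS: for a word $\mathbf J=(\j_1,\dots,\j_p)$ with $\j_i\in\prod_{k=1}^m\N_{N_k}$ set $u_{\mathbf J}^{\epsilon}:=u_{\j_1}^{\epsilon}\circ\cdots\circ u_{\j_p}^{\epsilon}$ and $E_{\mathbf J}:=u_{\mathbf J}^{\epsilon}(\mathcal I)$. Since each $u_{k,j_k}^{\epsilon^k}$ is affine with slope $\pm N_k^{-1}$, the $P^{\,p}$ cells $E_{\mathbf J}$ tile $\mathcal I$, with $E_{\mathbf J}$ a box of $k$‑th side $N_k^{-p}$; let $\delta_p$ denote the scale of the associated cubical refinement, so that $-\log\delta_p=\tfrac pm\log P$ when $N_1=\cdots=N_m$ (for unequal $N_k$ one uses an anisotropic refinement of the cells into cubes). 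The quantity to control is the total oscillation $O_p:=\sum_{\mathbf J}\mathrm{osc}(g;E_{\mathbf J})$.

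The second step is the recursion for $O_p$, obtained from the self‑referential identity \eqref{2eq20}, which on $\mathcal I$ reads $g\circ u_{\j}^{\epsilon}=f\circ u_{\j}^{\epsilon}+\alpha_{\j}(g-b)$; I take the scalings $\alpha_{\j}$ constant, the case of H\"older scaling functions adding only a comparable, faster‑decaying term. For any subcell $E\subseteq\mathcal I$ this gives
\[
\mathrm{osc}\bigl(g;u_{\j}^{\epsilon}(E)\bigr)\le\mathrm{osc}\bigl(f\circ u_{\j}^{\epsilon};E\bigr)+|\alpha_{\j}|\,\mathrm{osc}(g;E)+|\alpha_{\j}|\,\mathrm{osc}(b;E).
\]
Writing a length‑$p$ word as $(\j,\mathbf J')$ so that $E_{(\j,\mathbf J')}=u_{\j}^{\epsilon}(E_{\mathbf J'})$, summing over all $\j$ and all length‑$(p-1)$ words $\mathbf J'$, and using $\gamma=\sum_{\j}|\alpha_{\j}|$ gives
\[
O_p\le\gamma\,O_{p-1}+\sum_{\j}\sum_{\mathbf J'}\mathrm{osc}\bigl(f\circ u_{\j}^{\epsilon};E_{\mathbf J'}\bigr)+\gamma\sum_{\mathbf J'}\mathrm{osc}\bigl(b;E_{\mathbf J'}\bigr).
\]
Because $f\in\Lip_{c_1}\xi_1$, $b\in\Lip_{c_2}\xi_2$, the relevant cells have diameter $\lesssim\delta_p$, there are $P^{\,p}$ pairs $(\j,\mathbf J')$ and $P^{\,p-1}$ words $\mathbf J'$, and $\gamma<P$ (each $|\alpha_{\j}|<1$), the last two sums are $\lesssim P^{\,p}\delta_p^{\,\xi}$ with $\xi=\min\{\xi_1,\xi_2\}$; since $\delta_p^{\,\xi}\asymp P^{-p\xi/m}$ this becomes
\[
O_p\le\gamma\,O_{p-1}+C\,\sigma^{\,p},\qquad \sigma:=P^{\,1-\xi/m}>1,
\]
and iterating, $O_p\le C'(p+1)\bigl(\max\{\gamma,\sigma\}\bigr)^{p}$.

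The third step is the box count. Covering, for each cell $E_{\mathbf J}$, the slab $E_{\mathbf J}\times[\inf_{E_{\mathbf J}}g,\sup_{E_{\mathbf J}}g]$ by $\delta_p$‑cubes (one base cube and $\le\mathrm{osc}(g;E_{\mathbf J})/\delta_p+2$ vertical ones) gives $N_{\delta_p}(G)\le O_p/\delta_p+2P^{\,p}\lesssim(p+1)\bigl(P^{1/m}\max\{\gamma,\sigma\}\bigr)^{p}$, using $P^{1/m}\sigma=P^{\,1+(1-\xi)/m}\ge P$. With $-\log\delta_p=\tfrac pm\log P$, the definition of box dimension then yields
\[
\dim_B(G)\ \le\ 1+\frac{m\,\log\max\{\gamma,\sigma\}}{\log P}\ =\ \max\Bigl\{\,m+1-\xi,\ \ 1+\tfrac{m\log\gamma}{\log P}\,\Bigr\},
\]
because $\tfrac{m\log\sigma}{\log P}=m-\xi$. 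The trichotomy now follows: if $\gamma\le1$ then $\gamma<\sigma$ and the bound is $m+1-\xi$, i.e.\ (i); if $\gamma>1$ then $\gamma<P\le P^{(m-\xi)/(m-1)}$ forces $1+\tfrac{m\log\gamma}{\log P}\le m+1-\xi+\tfrac{\log\gamma}{\log P}$, i.e.\ (ii); and since $\gamma<P\le P^{\,m-\xi}$ for $m\ge2$, the hypothesis $(N_1\cdots N_m)^{\xi-m}\gamma>1$ of (iii) never holds, so (iii) is vacuous. I expect the real work to be in two places: making the covering step precise when the $N_k$ differ, where the cells are anisotropic boxes and one must choose the refinement so that $-\log\delta_p\asymp p\log(N_1\cdots N_m)$; and the trichotomy bookkeeping, where the elementary inequality $\gamma=\sum_{\j}\|\alpha_{\j}\|_{\infty}<N_1\cdots N_m$ is precisely what collapses the three stated regimes to the two that can actually occur.
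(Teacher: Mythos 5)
Your overall strategy (oscillation sums over the IFS cell hierarchy, a recursion $O_p\le\gamma O_{p-1}+C\sigma^p$, then a box count and a case analysis using $\gamma<N_1\cdots N_m$) is sound and, in the case $N_1=\cdots=N_m$, it is complete and correct: your intermediate bound $\dim_B(G)\le\max\bigl\{m+1-\xi,\ 1+\tfrac{m\log\gamma}{\log(N_1\cdots N_m)}\bigr\}$ does imply (i) and (ii), and your observation that the hypothesis of (iii) can never hold for $m\ge2$ (since $\gamma<N_1\cdots N_m\le(N_1\cdots N_m)^{m-\xi}$) is accurate — the paper's own proof never escapes this either, so (iii) is vacuously true as stated. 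This differs in execution from the paper, which does not use the natural cells at all: it works throughout with cubes of the single side length $(N_1\cdots N_m)^{-r}$ and runs the recursion directly on the box counts $\mathcal{N}(r)$, obtaining $\mathcal{N}(r+1)\le\gamma(N_1\cdots N_m)\mathcal{N}(r)+\mathcal{C}(N_1\cdots N_m)^{(r+1)(m+1-\xi)}$ and then summing the resulting geometric series in the three regimes.

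The genuine gap is exactly the point you defer: the case of distinct $N_k$, for which the theorem is stated. Your proposed fix — refine the anisotropic level-$p$ cells (sides $N_k^{-p}$) into cubes of side $\delta_p$ with $-\log\delta_p\asymp\tfrac{p}{m}\log(N_1\cdots N_m)$, i.e.\ $\delta_p\asymp\bar{N}^{-p}$ with $\bar{N}=(N_1\cdots N_m)^{1/m}$ — does not work as described. Covering the base of a single level-$p$ cell by such cubes requires $\prod_{k}\max\{1,(N_k/\bar{N})^{p}\}=Q^{p}$ cubes, where $Q=\prod_{k:\,N_k>\bar{N}}(N_k/\bar{N})>1$ whenever the $N_k$ are not all equal; this exponential horizontal multiplicity multiplies your box count by $Q^p$ and adds $\tfrac{m\log Q}{\log(N_1\cdots N_m)}>0$ to the final dimension bound, overshooting (i) and (ii). (Choosing instead $\delta_p=(\min_k N_k)^{-p}$ or $(\max_k N_k)^{-p}$ replaces $\log(N_1\cdots N_m)$ in the denominator by $m\log\min_kN_k$ or $m\log\max_kN_k$ and again fails to reproduce the stated constants.) To recover the theorem for unequal $N_k$ one has to do what the paper does: count at the common scale $(N_1\cdots N_m)^{-r}$ and run the recursion on the counts themselves, keeping track that the image under $W_{\j}^{\epsilon}$ of a column over one fine cell spreads over $(N_1\cdots N_m)/N_k$ finer cells in direction $k$; carried out carefully this yields $\dim_B(G)\le\max\{m+1-\xi,\ m+\tfrac{\log\gamma}{\log(N_1\cdots N_m)}\}$, which again gives (i) and (ii). So the missing ingredient is not bookkeeping but the correct treatment of the anisotropic covering, and the specific refinement you propose would have to be replaced, not merely made precise.
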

\begin{proof}
Our aim is to calculate the box dimension of the graph of the fractal function $f_{\n}^{(\alpha,\epsilon)}$. For this, we consider a cover $\Lambda(r)$ of $G$  whose elements are $m$-cubes  with  sides of length $\frac{1}{(N_1N_2\cdots N_m)^r}$ and of the form 
\begin{align}\label{eqdim1}
 \begin{split}
      \left[\frac{p_1-1}{(N_1N_2\cdots N_m)^r}, \frac{p_1}{(N_1N_2\cdots N_m)^r} \right] \times  \left[\frac{p_2-1}{(N_1N_2\cdots N_m)^r}, \frac{p_2}{(N_1N_2\cdots N_m)^r} \right]\times\\
      \cdots \times \left[\frac{k_m-1}{(N_1N_2\cdots N_m)^r}, \frac{p_m}{(N_1N_2\cdots N_m)^r} \right] \times \left[c, c+ \frac{1}{(N_1N_2\cdots N_m)^r} \right],
    \end{split}
\end{align}
{for} $p_i = 1, 2, \ldots, (N_1N_2\cdots N_m)^r$, $r\in\N_0$, $i\in \mathbb{N}_m$, and $c\in \mathbb{R}$.
 Suppose  $\mathcal{N}(r)$ is the number of such cubes necessary to cover the graph $G$.
 Let $\mathcal{N}_0(r)$ be the smallest number of arbitrary $(m+1)$-cubes of size
 \[
 \prod_{j=1}^{m+1} \frac{1}{(N_1N_2\cdots N_m)^r}.
 \]
 required to cover $G$. Hence, $\mathcal{N}_0(r)\leq \mathcal{N}(r)$. 
 
 Each arbitrary $(m+1)$-dimensional cube can be cover by at most $2^m$ $(m+1)$-cube of the form \eqref{eqdim1}. Thus, $\mathcal{N}(r)\leq 2^m\mathcal{N}_0(r)$ and, therefore, 
 \[
 \mathcal{N}_0(r)\leq \mathcal{N}(r)\leq 2^m\mathcal{N}_0(r).
 \]
Hence, we can use covers of the  form \eqref{eqdim1} to compute the box dimension of the graph $G$ of $f_{\n}^{(\alpha,\epsilon)}$.
 
Denote by $\Lambda(r,p_1,p_2,\cdots,p_m)$ the collection of $(m+1)$-cubes in 
 \begin{align}\label{eqdim2}
 \begin{split}
      \left[\frac{p_1-1}{(N_1N_2\cdots N_m)^r}, \frac{p_1}{(N_1N_2\cdots N_m)^r} \right] \times  \left[\frac{p_2-1}{(N_1N_2\cdots N_m)^r}, \frac{p_2}{(N_1N_2\cdots N_m)^r} \right] \times \cdots \\
       \cdots \times \left[\frac{p_m-1}{(N_1N_2\cdots N_m)^r}, \frac{p_m}{(N_1N_2\cdots N_m)^r} \right], \text{ for } p_i=1,2,\cdots,(N_1N_2\cdots N_m)^r
    \end{split}
\end{align}
of the form \eqref{eqdim1} consitsting of $\mathcal{N}(r,p_1,p_2,\cdots,p_m)$ $(m+1)$-dimensional cubes.
One observes that
$$\mathcal{N}(r)=\sum_{p_1=1}^{(N_1N_2\cdots N_m)^r}  \sum_{p_2=1}^{(N_1N_2\cdots N_m)^r}\cdots \sum_{p_m=1}^{(N_1N_2\cdots N_m)^r} \mathcal{N}(r,p_1,p_2,\cdots,p_m).$$
For  $\textbf{j}\in \prod\limits_{k=1}^m \mathbb{N}_k$, the image of $\Lambda(r,p_1,p_2,\cdots,p_m)$ under the map $v_{\textbf{j}}$ is contained in 
\begin{align}\label{eqdim3}
 \begin{split}
      & \left[\frac{N_2 N_3\cdots N_m (l_1(p_1,j_1)-1)}{(N_1N_2\cdots N_m)^{r+1}}, \frac{N_2N_3\cdots N_m l_1(p_1,j_1)}{(N_1N_2\cdots N_m)^{r+1}} \right] \\  & \quad \times\left[\frac{N_1N_3\cdots N_m (l_2(p_2,j_2)-1)}{(N_1N_2\cdots N_m)^{r+1}}, \frac{N_1N_3\cdots N_m l_2(k_2,j_2)}{(N_1N_2\cdots N_m)^{r+1}} \right]\times\cdots \\
      & \quad\times \left[\frac{N_1N_3\cdots N_{m-1}(l_m(p_m,j_m)-1)}{(N_1N_2\cdots N_m)^{r+1}}, \frac{N_1N_3\cdots N_{m-1}l_m(p_m,j_m)}{(N_1N_2\cdots N_m)^{r+1}} \right] \times \mathbb{R},
    \end{split}
\end{align}
where $l_k(p_k,j_k)=p_k+(j_k-1)(N_1N_2\cdots N_m)^r$. Therefore, we obtain 
\begin{align}\label{eqdim4}
    \begin{split}
      \mathcal{N}(r+1,p_1,p_2,\cdots,p_m)&=\sum_{j_1=1}^{N_1}  \sum_{j_2=1}^{N_2}\cdots \sum_{j_m=1}^{N_m} \sum_{p_1,p_2,\cdots,p_m=1}^{(N_1N_2\cdots N_m)^r}\\
      &~~~~~~~~~~~~~\cdot\mathcal{N}(r,l_1(p_1,j_1),l_2(p_2,j_2),\cdots,l_k(p_k,j_k)).  
    \end{split}
\end{align}
As $f$ and $b$ are uniform H\"olderian on $I$ with exponents $\xi_1,\xi_2 \in (0,1]$, we obtain the following estimates for $X=(x_1,x_2,\cdots,x_m), X'=(x_1',x_2',\cdots,x_m') \in \prod\limits_{k=1}^{m}\left[ \frac{p_k-1}{(N_1N_2\cdots N_m)^{r}},\frac{p_k}{(N_1N_2\cdots N_m)^{r}} \right]$:
\begin{align}
    \begin{split}
       |f(u_{\textbf{j}}(X))-f(u_{\textbf{j}}(X'))| &\leq \frac{A_1}{(N_1N_2\cdots N_m)^{\xi_1(r+1)}},\\
      |b(X)-b(X')| &\leq \frac{A_2}{(N_1N_2\cdots N_m)^{\xi_2 r}}.
    \end{split}
\end{align}
Thus, the maximum height of $v_{\textbf{j}}(\Lambda(r,p_1,p_2,\cdots,p_m))$ is bounded above by
\[
\frac{\|\alpha_{\textbf{j}}\|_{\infty}\mathcal{N}(r,p_1,p_2,\cdots,p_m)}{(N_1N_2\cdots N_m)^{ r}}+\frac{A_1}{(N_1N_2\cdots N_m)^{\xi_1(r+1)}}+\frac{A_2\|\alpha_{\textbf{j}}\|_{\infty}}{(N_1N_2\cdots N_m)^{\xi_2 r}}.
\]
Now,
\begin{align*}
    \begin{split}
     \mathcal{N}(r,l_1(p_1,j_1),l_2(p_2,j_2),\cdots,l_k(p_k,j_k))= \left(\frac{\|\alpha_{\textbf{j}}\|_{\infty}\mathcal{N}(r,p_1,p_2,\cdots,p_m)}{(N_1N_2\cdots N_m)^{ r}}\right.\\ \left.+ \frac{A_1}{(N_1N_2\cdots N_m)^{\xi_1(r+1)}} + \frac{A_2\|\alpha_{\textbf{j}}\|_{\infty}}{(N_1N_2\cdots N_m)^{\xi_2 r}}\right) (N_1N_2\cdots N_m)^{r+1}+2\\
     = \|\alpha_{\textbf{j}}\|_{\infty} \mathcal{N}(r,p_1,p_2,\cdots,p_m) (N_1N_2\cdots N_m)+ A_1 (N_1N_2\cdots N_m)^{(1-\xi_1)(r+1)}\\
     + A_2\|\alpha_{\textbf{j}}\|_{\infty} (N_1N_2\cdots N_m)^{(1-\xi_2)r+1}+ 2.
    \end{split}
\end{align*}
This produces an estimate of the form
\begin{align}
    \begin{split}
       \sum_{j_1=1}^{N_1}  \sum_{j_2=1}^{N_2}&\cdots \sum_{j_m=1}^{N_m}\mathcal{N}(r,l_1(p_1,j_1),l_2(p_2,j_2),\cdots,l_k(p_k,j_k)) \leq\\
       &\mathcal{N}(r,p_1,p_2,\cdots,p_m) (N_1N_2\cdots N_m) \gamma + A_1 (N_1N_2\cdots N_m)^{(1-\xi_1)(r+1)+1}\\
       & \quad + A_2 (N_1N_2\cdots N_m)^{(1-\xi_2)r+1} \gamma + 2(N_1N_2\cdots N_m). 
    \end{split}
\end{align}
Substituting the above estimate into \eqref{eqdim4}, we obtain
\begin{align}
    \begin{split}
     \mathcal{N}(r+1)&=  \sum_{p_1,p_2,\cdots,p_m=1}^{(N_1N_2\cdots N_m)^r}  \sum_{j_1=1}^{N_1}  \sum_{j_2=1}^{N_2}\cdots \sum_{j_m=1}^{N_m}\mathcal{N}(r,l_1(p_1,j_1),l_2(p_2,j_2),\cdots,l_m(p_m,j_k))\\
     &\leq \sum_{p_1,p_2,\cdots,p_m=1}^{(N_1N_2\cdots N_m)^r} \mathcal{N}(r,p_1,p_2,\cdots,p_m) (N_1N_2\cdots N_m) \gamma \\
     & \quad + A_1 (N_1N_2\cdots N_m)^{(1-\xi_1)(r+1)+1}+A_2 (N_1N_2\cdots N_m)^{(1-\xi_2)r+1} \gamma \\
     & \quad + 2(N_1N_2\cdots N_m))\\
     &= \mathcal{N}(r) (N_1N_2\cdots N_m) \gamma + A_1 (N_1N_2\cdots N_m)^{(1-\xi_1)(r+1)+mr+1} \\
     & \quad + A_2 (N_1N_2\cdots N_m)^{(1-\xi_2)r+1+mr} \gamma + 2(N_1N_2\cdots N_m)^{m r+1})\\
     &\leq \mathcal{N}(r) (N_1N_2\cdots N_m) \gamma + A_1 (N_1N_2\cdots N_m)^{(r+1)(m+1-\xi)}\\
     & \quad + A_2 (N_1N_2\cdots N_m)^{(r+1)(m+1-\xi)} \gamma + 2(N_1N_2\cdots N_m)^{(r+1)(m+1-\xi)}\\
     &=\mathcal{N}(r) (N_1N_2\cdots N_m) \gamma + (N_1N_2\cdots N_m)^{(r+1)(m+1-\xi)} \mathcal{C}, 
    \end{split}
\end{align}
where $\mathcal{C}:=A_1+A_2 \gamma +2$.
Using the above inequality repeatedly with respect to $r$, we obtain the following geometric series type of expression 
\begin{align*}
    \begin{split}
       \mathcal{N}(r)&\leq \mathcal{N}(r-1) (N_1N_2\cdots N_m) \gamma + (N_1N_2\cdots N_m)^{r(m+1-\xi)} \mathcal{C}\\
   &\leq [\mathcal{N}(r-2) (N_1N_2\cdots N_m) \gamma + (N_1N_2\cdots N_m)^{(r-1)(m+1-\xi)} \mathcal{C}](N_1N_2\cdots N_m) \gamma \\
& \quad + (N_1N_2\cdots N_m)^{r(m+1-\xi)} \mathcal{C}\\
   &\leq \mathcal{N}(r-2) (N_1N_2\cdots N_m)^2 \gamma^2 + (1+(N_1N_2\cdots N_m)^{(\xi-m-1)} \gamma)\\
   & \quad\cdot(N_1N_2\cdots N_m)^{r(m-\xi)} \mathcal{C}.
    \end{split}
\end{align*}
Continuing this process yields
\begin{align}\label{eqdim5}
    \begin{split}
     \mathcal{N}(r)\leq   \mathcal{N}(0) (N_1N_2\cdots N_m)^{r} \gamma^{r} + \{1+(N_1N_2\cdots N_m)^{(\xi-m)} \gamma\\
     + (N_1N_2\cdots N_m)^{2(\xi-m)} \gamma^2+ \cdots +  (N_1N_2\cdots N_m)^{(r-1)(\xi-m)} \gamma^{(r-1)} \}\\
      \cdot (N_1N_2\cdots N_m)^{r(m+1-\xi)} \mathcal{C}
    \end{split}
\end{align}
Thus, we have the  following three cases:
\vskip 4pt\noindent
\textbf{Case (i):} $\gamma \leq 1$. 
\vskip 4pt\noindent
As $N_k \geq 2$ and $\xi \in (0,1]$, $(N_1N_2\cdots N_m)^{k(\xi-m)} \leq 1$, for $k\geq 1$. By \eqref{eqdim5}, we have
\begin{align}
    \begin{split}
     \mathcal{N}(r)&\leq   \mathcal{N}(0) (N_1N_2\cdots N_m)^{r} \gamma^{r} +  r(N_1N_2\cdots N_m)^{r(m+1-\xi)} \mathcal{C}\\
     &\leq \mathcal{N}(0) (N_1N_2\cdots N_m)^{r(m+1-\xi)} r  +  r(N_1N_2\cdots N_m)^{r(m+1-\xi)} \mathcal{C}\\
     &\leq \mathcal{C}_1 r (N_1N_2\cdots N_m)^{r(m+1-\xi)}, \quad \text{where}\quad \mathcal{C}_1=\mathcal{N}(0)+\mathcal{C}.
    \end{split}
\end{align}
Hence, 
\begin{align}\label{eqdim6}
    \begin{split}
     \dim_B(G)&=   \lim_{r \to \infty} \frac{\log(\mathcal{N}(r))}{\log((N_1N_2\cdots N_m)^r)}\\
     &\leq\lim_{r \to \infty} \frac{\log(\mathcal{C}_1 r (N_1N_2\cdots N_m)^{r(m+1-\xi)})}{\log((N_1N_2\cdots N_m)^r)}
     =m+1-\xi.
    \end{split}
\end{align}
By the continuity of the fractal function, we have that $\dim_B(G) \geq m$, and  using \eqref{eqdim6}, we obtain
\[
m\leq \dim_B(G) \leq m+1-\xi.
\]
\vskip 4pt\noindent
 \textbf{Case (ii):} $\gamma >1$ and $(N_1N_2\cdots N_m)^{(\xi-m)} \gamma \leq 1$.
 \vskip 4pt\noindent
 By \eqref{eqdim5}, we have
 \begin{align}
 \begin{split}
   \mathcal{N}(r)&\leq \mathcal{N}(0) (N_1N_2\cdots N_m)^{r} \gamma^{r} + r(N_1N_2\cdots N_m)^{r(m+1-\xi)} \mathcal{C}\\ 
   &\leq \mathcal{C}_2  \gamma^{r} r (N_1N_2\cdots N_m)^{r(m+1-\xi)}, \quad \text{where} \quad \mathcal{C}_2 = \mathcal{C} + \mathcal{N}(0).
 \end{split}  
 \end{align}
  Hence 
  \begin{align}
    \begin{split}
     \dim_B(G)& \leq   \lim_{r \to \infty} \frac{\log(\mathcal{C}_2 \gamma^{r} r (N_1N_2\cdots N_m)^{r(m+1-\xi)})}{\log(N_1N_2\cdots N_m)^r}\\
     &=m+1-\xi+ \frac{\log(\gamma)}{\log(N_1N_2\cdots N_m)}.
    \end{split}
\end{align}
Hence, using the above inequality, we obtain 
\[
m\leq \dim_B(G) \leq m+1-\xi + \frac{\log(\gamma)}{\log(N_1N_2\cdots N_m)}.
\]
\vskip 4pt\noindent
\textbf{Case (iii)}: $\gamma >1$ and $(N_1N_2\cdots N_m)^{(\xi-m)} \gamma > 1$. 
\vskip 4pt\noindent
Again by \eqref{eqdim5}
\begin{align}\label{eqbox10}
 \begin{split}
   \mathcal{N}(r)&\leq \mathcal{N}(0) (N_1N_2\cdots N_m)^{r} \gamma^{r} + \left[ \frac{(N_1N_2\cdots N_m)^{r(\xi-m)} \gamma^r-1}{(N_1N_2\cdots N_m)^{(\xi-m)} \gamma-1} \right]\\
   & \quad\cdot(N_1N_2\cdots N_m)^{r(m+1-\xi)} \mathcal{C}\\ 
  &\leq \mathcal{N}(0) (N_1N_2\cdots N_m)^{r} \gamma^{r} + \left[ \frac{(N_1N_2\cdots N_m)^{r(\xi-m)} \gamma^r}{(N_1N_2\cdots N_m)^{(\xi-m)} \gamma-1} \right]\\
& \quad \cdot(N_1N_2\cdots N_m)^{r(m+1-\xi)} \mathcal{C}\\
  &\leq \mathcal{N}(0) (N_1N_2\cdots N_m)^{r} \gamma^{r} +\frac{(N_1N_2\cdots N_m)^r \gamma^r}{(N_1N_2\cdots N_m)^{(\xi-m)} \gamma-1}\,\mathcal{C}.
 \end{split}  
 \end{align}
Using \eqref{eqbox10} and arguments similar to those above, we obtain 
\begin{align*}
    \begin{split}
     \dim_B(G) \leq  1+ \frac{\log(\gamma)}{\log(N_1N_2\cdots N_m)}.
    \end{split}
\end{align*}
\end{proof}
 \begin{remark}
The cases (i)-(iii) in Theorem \ref{Thm-BD} imply that the box dimension of ZFIFs is independent of the signature matrix $\epsilon$.
\end{remark} 

It is known that for a univariate function $f\in \Lip_A \beta$, the corresponding univariate Bernstein function $B_nf \in \Lip_A \beta$\cite{Brown_Elliott_Paget}. We need a similar result for the computation of the box dimension of the graph of a multivariate Bernstein zipper fractal function.
\begin{proposition}\label{lemmadim}
If a multivariate function $f$ defined on $[0,1]^m$ is in $\Lip_A \beta$ then the corresponding multivariate Bernstein polynomial $B_{\n}f$ is also an element of $\Lip_A \beta$.
\end{proposition}
\begin{proof}
We know that 
\begin{align}
    \begin{split}
    B_{\n}f(X) = \sum_{j_1=0}^{n_1} \sum_{j_2=0}^{n_2} \cdots \sum_{j_m=0}^{n_m}  \prod\limits_{p=1}^{m} b_{j_p,n_p}(x_p) f\left(\frac{k_1}{n_1},\frac{k_2}{n_2},\cdots,\frac{k_m}{n_m}\right), 
    \end{split}
\end{align}
where $b_{j_p,n_p}(x_p):=\binom{n_p}{k_p}x^{k_p}(1-x_p)^{n_p-k_p}$, $p\in \mathbb{N}_m$, and $X:=(x_1,x_2,\cdots,x_m)$. Let 
\begin{align}
      A_{k_p,l_p}^{n_p}(x_p,y_p):=\frac{n_p!}{k_p! l_p!(n_p-k_p- l_p)!}\, x_p^{k_p}(y_p-x_p)^{l_p}(1-y_p)^{n_p-k_p-l_p}
\end{align}
for $p\in \mathbb{N}_m$ and $x_p,y_p \in I$. 

The following results is valid for $n\in \mathbb{N}$:
\begin{align}\label{eqlip03}
       \sum_{k=0}^{n} \binom{n}{k} & y^{k}(1-y)^{n-k} f\left(\frac{k}{n}\right)\nonumber\\
       & = \sum_{k=0}^{n} \sum_{l=0}^{n-k} \frac{n!}{k! l!(n-k- l)!}\; x^{k}(y-x)^{l}(1-y)^{n-k-l}f\left(\frac{k}{n}\right).
 \end{align}
Let $X:=(x_1,x_2,\cdots,x_m)$ and $Y:=(y_1,y_2,\cdots,y_m)$. Then, there are $2^m$ possible arrangements in the corresponding arguments of $X$ and  $Y$.  Take as one possible case
$x_1\leq y_1$, $x_2\geq y_2$, $x_3\geq y_3$, \ldots, $x_m\geq y_m$. Eqn.  \eqref{eqlip03} implies
\begin{align*}
    \begin{split}
     B_{\n}f(X) &=   \sum_{j_2=0}^{n_2} \cdots \sum_{j_m=0}^{n_m}  \prod\limits_{p=1,p \ne 1}^{m} b_{j_p,n_p}(x_p) \sum_{j_1=0}^{n_1} b_{j_1,n_1}(x_1)f\left(\frac{j_1}{n_1},\frac{j_2}{n_2},\cdots,\frac{j_m}{n_m}\right)\\
    &=\sum_{j_2=0}^{n_2} \cdots \sum_{j_m=0}^{n_m}  \prod\limits_{p=1,p \ne 1}^{m} b_{j_p,n_p}(x_p) \sum_{k_1=0}^{n_1} \sum_{l_1=0}^{n_1-k_1} A_{j_1,l_1}^{n_1}(y_1,x_1)\\ &~~~~~~~~~~~~~~~~~~~~~~~~~~~~~~~~~~~~~~~~~~~~~~~~~~~~~\cdot f\left(\frac{k_1}{n_1},\frac{j_2}{n_2},\cdots,\frac{j_m}{n_m}\right)\\
    &=\sum_{k_1=0}^{n_1} \sum_{l_1=0}^{n_1-k_1} \sum_{j_3=0}^{n_3} \cdots \sum_{j_m=0}^{n_m}  \prod\limits_{p=1,p \ne 1}^{m} b_{j_p,n_p}(x_p)  A_{j_1,l_1}^{n_1}(y_1,x_1) \\ 
    &~~~~~~~~~~~~~~~~~~~~~~~~~~~~~~~~~~~~\cdot \sum_{j_2=0}^{n_2} b_{j_2,n_2}(x_2)f\left(\frac{k_1}{n_1},\frac{j_2}{n_2},\cdots,\frac{j_m}{n_m}\right)\\
   & =\sum_{k_1=0}^{n_1} \sum_{l_1=0}^{n_1-k_1} \sum_{j_3=0}^{n_3} \cdots \sum_{j_m=0}^{n_m}  \prod\limits_{p=1,p \ne 1, 2}^{m} b_{j_p,n_p}(x_p)  A_{j_1,l_1}^{n_1}(y_1,x_1)\\
   &~~~~~~~~~~~~~~~~~~~~~~\cdot \sum_{k_2=0}^{n_2} \sum_{l_2=0}^{n_2-k_2} A_{j_2,l_2}^{n_2}(x_2,y_2) f\left(\frac{k_1}{n_1},\frac{k_2+l_2}{n_2},\cdots,\frac{j_m}{n_m}\right),
      \end{split} 
\end{align*}
    \begin{align*}
    \begin{split}
     B_{\n}f(X) &=\sum_{k_1=0}^{n_1} \sum_{l_1=0}^{n_1-k_1}  \sum_{k_2=0}^{n_2} \sum_{l_2=0}^{n_2-k_2}  \sum_{j_3=0}^{n_3} \cdots \sum_{j_m=0}^{n_m}  \prod\limits_{p=1,p \ne 1, 2}^{m} b_{j_p,n_p}(x_p)\\  
    &~~~~~~~~~~~~~~~~~~\cdot A_{j_1,l_1}^{n_1}(y_1,x_1) A_{j_2,l_2}^{n_2}(x_2,y_2)f\left(\frac{k_1}{n_1},\frac{k_2+l_2}{n_2},\cdots,\frac{j_m}{n_m}\right).
    \end{split} 
\end{align*}
Continuing this process, we obtain
\begin{align}\label{eqlip3}
    \begin{split}
     B_{\n}f(X)=\sum_{k_1=0}^{n_1} \sum_{l_1=0}^{n_1-k_1}  \sum_{k_2=0}^{n_2} \sum_{l_2=0}^{n_2-k_2}  \sum_{j_3=0}^{n_3} \cdots \sum_{k_m=0}^{n_m} \sum_{l_m=0}^{n_m-k_m}   A_{j_1,l_1}^{n_1}(y_1,x_1)\\
     \prod\limits_{p=1,p \ne 1}^{m}A_{j_p,l_p}^{n_p}(x_p,y_p)f\left(\frac{k_1}{n_1},\frac{k_2+l_2}{n_2},\cdots,\frac{k_m+l_m}{n_m}\right).
    \end{split} 
\end{align}
Similarly, we get 
\begin{align}\label{eqlip4}
    \begin{split}
     B_{\n}f(Y)=\sum_{k_1=0}^{n_1} \sum_{l_1=0}^{n_1-k_1}  \sum_{k_2=0}^{n_2} \sum_{l_2=0}^{n_2-k_2}  \sum_{j_3=0}^{n_3} \cdots \sum_{k_m=0}^{n_m} \sum_{l_m=0}^{n_m-k_m}   A_{j_1,l_1}^{n_1}(y_1,x_1)\\
     \prod\limits_{p=1,p \ne 1}^{m}A_{j_p,l_p}^{n_p}(x_p,y_p)f\left(\frac{k_1+l_1}{n_1},\frac{k_2}{n_2},\cdots,\frac{k_m}{n_m}\right).
    \end{split} 
\end{align}
By \eqref{eqlip3} and \eqref{eqlip4}, we can write
\begin{align}\label{eqlip5}
\begin{split}
     |B_{\n}f(X)&- B_{\n}f(Y)|\\
     &\leq\sum_{k_1=0}^{n_1} \sum_{l_1=0}^{n_1-k_1}  \sum_{k_2=0}^{n_2} \sum_{l_2=0}^{n_2-k_2}  \sum_{j_3=0}^{n_3}\cdots
     \sum_{k_m=0}^{n_m} \sum_{l_m=0}^{n_m-k_m} A_{j_1,l_1}^{n_1}(y_1,x_1)\\
     &~~~~~~~~~~~~~~~\cdot\prod\limits_{p=1,p \ne 1}^{m}A_{j_p,l_p}^{n_p}(x_p,y_p) | f\left(\frac{k_1}{n_1},\frac{k_2+l_2}{n_2},\cdots,\frac{k_m+l_m}{n_m}\right)\\
     &~~~~~~~~~~~~~~~~~~~~~~~~~~~~~~~~~~~~~~~~~-f\left(\frac{k_1+l_1}{n_1},\frac{k_2}{n_2},\cdots,\frac{k_m}{n_m}\right)|\\
     &\leq \sum_{k_1=0}^{n_1} \sum_{l_1=0}^{n_1-k_1}  \sum_{k_2=0}^{n_2} \sum_{l_2=0}^{n_2-k_2}  \sum_{j_3=0}^{n_3}\cdots \sum_{k_m=0}^{n_m} \sum_{l_m=0}^{n_m-k_m} A_{j_1,l_1}^{n_1}(y_1,x_1)\\
     &~~~~~~~~~~~~~~~\cdot \prod\limits_{p=1,p \ne 1}^{m}A_{j_p,l_p}^{n_p}(x_p,y_p) A \left(\max \left\{\frac{k_p}{n_p}:p\in \mathbb{N}_m\right\}\right)^{\beta}.
    \end{split}
    \end{align}
    Suppose $\max \left\{\frac{k_p}{n_p}:p\in \mathbb{N}_m\right\}=\frac{k_a}{n_a}$, for some $a\in \mathbb{N}_m$.
    Then, \eqref{eqlip5} becomes
    \begin{align}\label{eqlip6}
\begin{split}
     |B_{\n}f(X)&- B_{\n}f(Y)|
     \leq \sum_{k_1=0}^{n_1} \sum_{l_1=0}^{n_1-k_1}  \sum_{k_2=0}^{n_2} \sum_{l_2=0}^{n_2-k_2}  \\
     &~~~~~~~~~~\cdots \sum_{k_m=0}^{n_m} \sum_{l_m=0}^{n_m-k_m} A_{j_1,l_1}^{n_1}(y_1,x_1)\prod\limits_{p=1,p \ne 1}^{m}A_{j_p,l_p}^{n_p}(x_p,y_p)  \left(\frac{k_a}{n_a}\right)^{\beta}\\
     &=A \sum_{k_1=0}^{n_1} \sum_{l_1=0}^{n_1-k_1} A_{j_1,l_1}^{n_1}(y_1,x_1) \sum_{k_2=0}^{n_2} \sum_{l_2=0}^{n_2-k_2} A_{j_2,l_2}^{n_2}(x_2,y_2)\\
     &\cdots \sum_{k_a=0}^{n_a} \sum_{l_a=0}^{n_a-k_a} A_{j_a,l_a}^{n_a}(x_a,y_a)\left(\frac{k_a}{n_a}\right)^{\beta} \cdots  \sum_{k_m=0}^{n_m} \sum_{l_m=0}^{n_m-k_m} A_{j_m,l_m}^{n_m}(x_m,y_m)\\
     &=A \sum_{l_1=0}^{n_1} b_{l_1,n_1}(x_1) \cdots \sum_{l_a=0}^{n_a} b_{l_a,n_a}(x_a) \left(\frac{k_a}{n_a}\right)^{\beta} \cdots \sum_{l_m=0}^{n_m} b_{l_m,n_m}(x_m)\\
     &=A \sum_{l_a=0}^{n_a} b_{l_a,n_a}(x_a) \left(\frac{l_a}{n_a}\right)^{\beta}= A B_{n_a}(x^{\beta},(y_a-x_a))\leq A (y_a-x_a)^{\beta}\\
     &= A \|X-Y\|^{\beta}.
    \end{split}
    \end{align}
Similarly, in all other cases, we get the same inequality. Therefore, 
    \[B_{\n}f \in \Lip_A^{\beta}.\]
\end{proof}

\begin{corollary}
Let $f\in C(\mathcal{I})$ with H\"older exponent $\xi \in (0,1]$. Let $G$ be the graph of the multivariate Bernstein fractal function $f_{\n}^{(\alpha,\epsilon)}$ associated with the IFS \eqref{2eq23}. Suppose that the interpolation points are not contained in an $(m-1)$ dimensional hyperplane. Let
\[
 \gamma := \sum\limits_{j_1=1}^{N_1} \sum\limits_{j_1=2}^{N_2} \cdots \sum\limits_{j_k=1}^{N_k} \|\alpha_{\textbf{j}}\|_{\infty}.
\]
Then,  the box dimension of  $G$ satisfies the estimates (i), (ii), (iii) of  Theorem \ref{Thm-BD}.
\end{corollary}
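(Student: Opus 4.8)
The plan is to obtain this corollary as an immediate specialization of Theorem~\ref{Thm-BD}, the only extra ingredient being Proposition~\ref{lemmadim}. Recall that the multivariate Bernstein zipper $\alpha$-fractal function $f_{\n}^{(\alpha,\epsilon)}$ associated with the IFS \eqref{2eq23} is exactly the multivariate zipper $\alpha$-fractal function treated in Theorem~\ref{Thm-BD}, with germ function $f$ and base function $b=B_{\n}f$. Thus it suffices to check that the hypotheses of Theorem~\ref{Thm-BD} are met with $\xi$ equal to the Hölder exponent of $f$.

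First I would use the standing assumption $f\in\Lip_A\xi$ on $\mathcal{I}=\prod_{p=1}^m I_p$ with $I_p=[0,1]$. By Proposition~\ref{lemmadim}, the base function $b=B_{\n}f$ then also lies in $\Lip_A\xi$, i.e.\ it is uniformly Hölderian with the \emph{same} exponent $\xi$ (and the same constant $A$). Hence, in the notation of Theorem~\ref{Thm-BD}, $\xi_1=\xi_2=\xi$, so $\xi=\min\{\xi_1,\xi_2\}$, and the quantity $\gamma=\sum_{j_1=1}^{N_1}\cdots\sum_{j_k=1}^{N_k}|\alpha_{j_1,\ldots,j_m}|$ in the corollary is literally the $\gamma$ appearing in Theorem~\ref{Thm-BD}. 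The remaining structural hypothesis of Theorem~\ref{Thm-BD}, namely that the interpolation points do not lie on an $(m-1)$-dimensional hyperplane, is precisely assumption~(1) of the corollary; moreover the uniform partitions of $I_k=[0,1]$ and the explicit affine maps $u_{k,j_k}^{\epsilon^k}$ used in Theorem~\ref{Thm-BD} are available because the IFS \eqref{2eq23} is a special case of \eqref{2eq000}.

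With all hypotheses of Theorem~\ref{Thm-BD} verified with this common exponent $\xi$, I would simply quote that theorem: its three alternatives translate verbatim into cases (i), (ii), (iii) for $\dim_B(G)$, completing the proof. The argument is a bookkeeping reduction, so there is no real obstacle; the only point requiring genuine content is the preservation of Hölder regularity under $f\mapsto B_{\n}f$, which is exactly Proposition~\ref{lemmadim}, and the only point to be careful about is that this preservation keeps both the exponent and the constant fixed, so that $\min\{\xi_1,\xi_2\}$ is genuinely $\xi$ rather than something smaller.
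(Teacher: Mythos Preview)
Your proposal is correct and follows essentially the same route as the paper: invoke Proposition~\ref{lemmadim} to conclude that $B_{\n}f$ is H\"older with the same exponent $\xi$ as $f$, so that the hypotheses of Theorem~\ref{Thm-BD} hold with $\xi_1=\xi_2=\xi$, and then read off the three estimates. Your write-up is in fact more careful than the paper's two-sentence proof, explicitly checking that the $\gamma$'s coincide and that the hyperplane and uniform-partition hypotheses carry over.
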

\begin{proof}
As $f$ is H\"olderian with exponent $\xi$, Lemma \ref{lemmadim} ensures that $B_{\textbf{n}}f$ is H\"olderian with the same exponent. Thus, the results in Theorem \ref{Thm-BD} are also valid for the box dimension of the graphs of multivariate Bernstein fractal functions.
\end{proof}
\section{Conclusions}  In this work, we have introduced multivariate zipper fractal interpolation prescribed on multivariate data given on a Cartesian grid by a binary signature matrix. Multivariate zipper $\alpha$-fractal functions are constructed and its approximation properties studied. Employing in the construction a multivariate Bernstein function as the base function, we have studied some shape preserving aspects of multivariate Bernstein zipper $\alpha$-fractal functions. Finally, we have derived bounds for box-dimension of the graph of a multivariate zipper  $\alpha$-fractal function based on the scaling factors and the H\"older exponents of a given function and base function. It was found that our methodology provides $2^m$-iso-dimensional multivariate fractal functions for the same scaling factor. 
\section*{Acknowledgement} The  second author would like to thank
IIT Madras for funding from the IoE project: SB20210848MAMHRD008558 through Ministry of Education, Govt. of India.

\end{document}